\def\r{\mathbb R}
\newtheorem{theorem}{Theorem}[section]
\newtheorem{proposition}[theorem]{Proposition}
\newtheorem{remark}[theorem]{Remark}
\newtheorem{lemma}[theorem]{Lemma}
\newtheorem{corollary}[theorem]{Corollary}
\begin{document}
\title{Bifurcation of cylinders for wetting and dewetting models   with striped geometry}
\markright{Bifurcation of cylinders}
\author{Rafael L\'opez \footnote{Partially supported by MEC-FEDER grant no. MTM2011-22547 and
Junta de Andaluc\'{\i}a grant no. P09-FQM-5088.}\\
 Departamento de Geometr\'{\i}a y Topolog\'{\i}a\\
Universidad de Granada\\
18071 Granada. Spain\\
{\tt rcamino@ugr.es}}

\date{}
\maketitle

\begin{abstract} We show that some pieces of cylinders bounded by two parallel straight-lines  bifurcate in a family of periodic non-rotational surfaces with constant mean curvature and with the same boundary conditions. These cylinders are initial interfaces in a problem of microscale range modeling the morphologies that adopt a liquid deposited in a chemically structured substrate with striped geometry or a liquid contained in a right wedge with Dirichlet and capillary boundary condition on the edges of the wedge. Experiments show that starting from these cylinders and once reached  a certain stage,  the shape of the liquid changes drastically in an abrupt manner. Studying the stability of such cylinders, the paper provides a mathematical proof of the existence of these new interfaces obtained in experiments. The analysis is based on the theory of bifurcation by simple eigenvalues of Crandall-Rabinowitz.
 \end{abstract}

{\bf Key words.} bifurcation, stability, constant mean curvature, cylinder

{\bf AMS subject classification.} 53A10, 35B32, 35J60

\section{Introduction and results}

This work is motivated by experiments realized in the Max Planck Institute of Colloids and Interfaces (MIPKG), at Potsdam,  on wetting and dewetting  of a liquid deposited on microchannels formed alternatively by   hydrophilic and hydrophobic strips (\cite{bl,ghll,kmlr,li}). See Fig. \ref{f-li}. In a microscopic scale and in absence of gravity, consider a long strip $\Omega$ contained in a plane $P$ such that $\Omega$ and $P-\Omega$ are made by different materials: $\Omega$ is  made by a hydrophilic substance whereas the substrate of $P-\Omega$ is hydrophobic. We place a droplet of water on top of $\Omega$  whose shape depends on the surface tension. Next, we add more liquid until that   touches the boundary of the strip and it starts to spread along it. Because $P-\Omega$ is hydrophobic, the liquid is forced to remain in the strip $\Omega$.  At the beginning, the liquid inherits the symmetries of the strip, that is, it is invariant in the non-bounded direction of $\Omega$, adopting cylindrical shapes. When we sufficiently increase the amount of liquid, there exists an instant where  the liquid suddenly exhibits bulges (\cite{bl,ghll,li}). See Fig. \ref{f-li}. In any stage, the liquid-air phase is modeled by a surface with constant mean curvature. Experimentally, this drastic transition between (pieces of) cylinders and new non-rotational morphologies motivates us to think in some type of non uniqueness results about the existence  of constant mean curvature surfaces   emanating from cylinders.

\begin{figure}[hbtp]
\begin{center}
  \includegraphics[width=.5\textwidth]{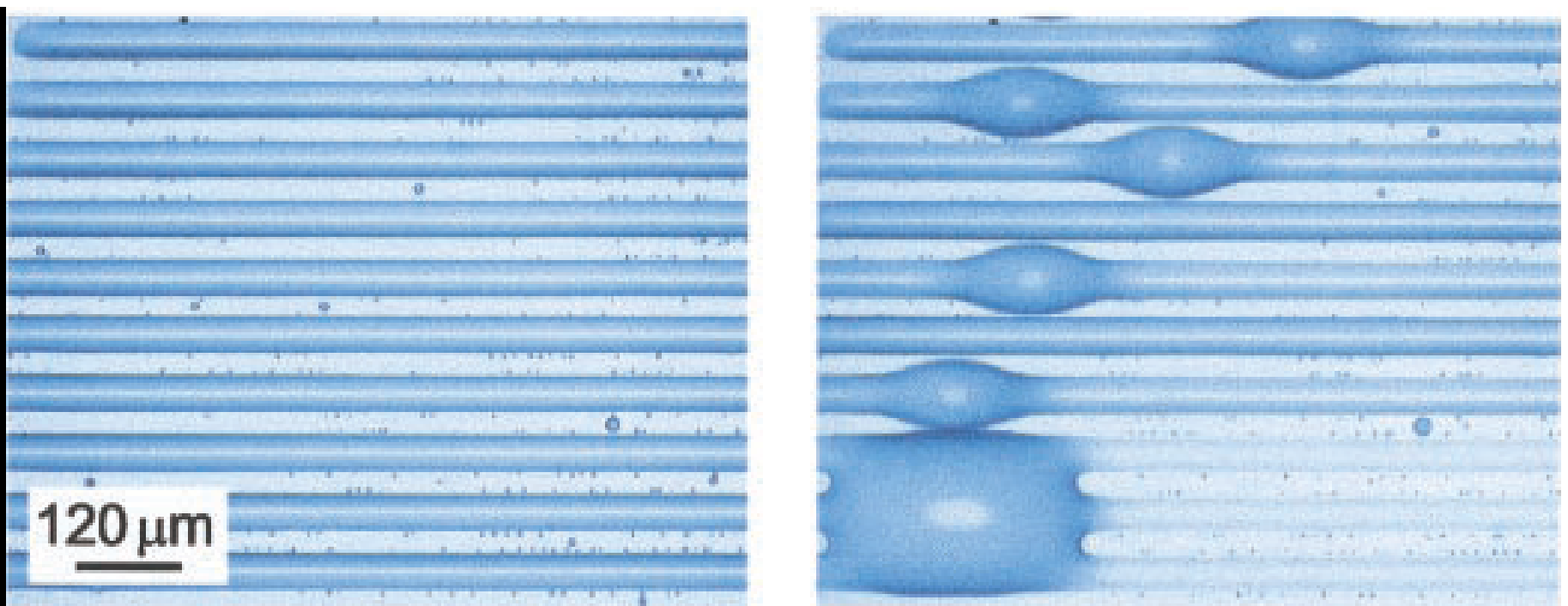}\hspace*{.5cm}\includegraphics[width=.4\textwidth]{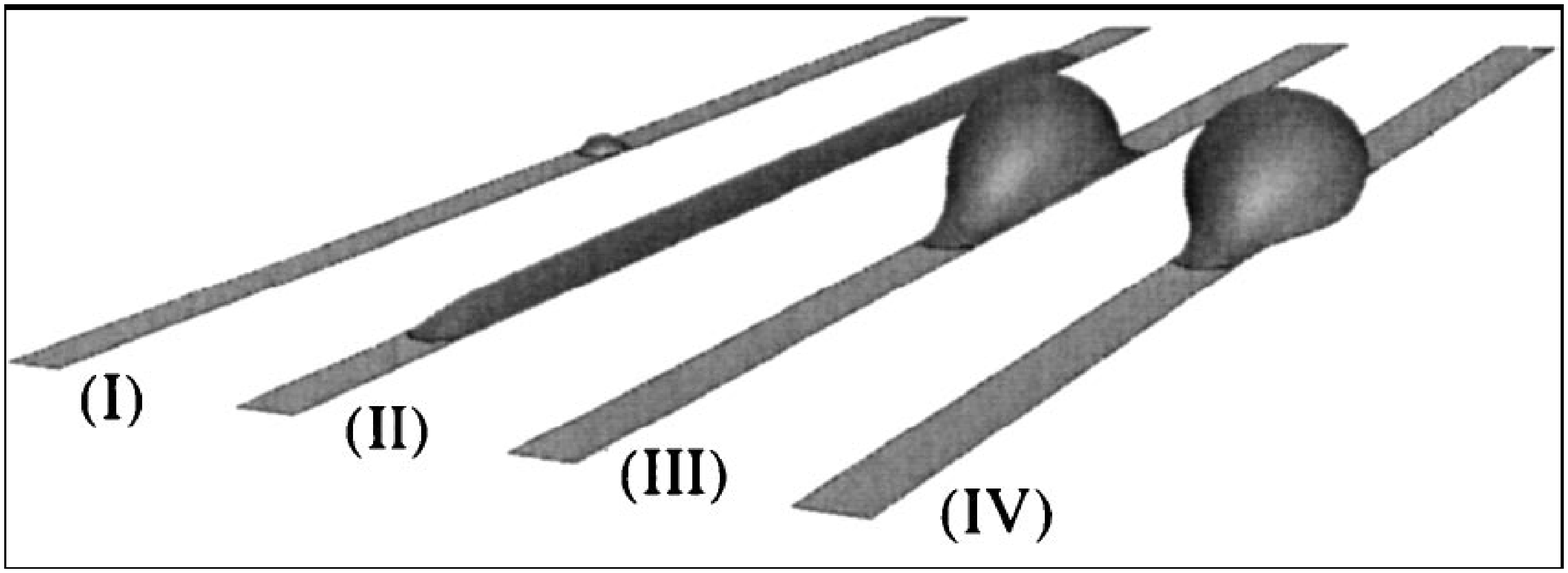}
\end{center}
\caption{Experiments and their graphic models obtained in MIPKG. On the left, it appears a planar domain chemically structured by strips made alternatively by hydrophilic and hydrophobic materials. In this picture, a sufficiently amount of liquid has been added in such way that the liquid covers the hydrophilic strips remaining pinned to the boundary lines. At the initial stages, the morphologies of the liquid are round cylinders. If we follow adding more liquid, experiments show that the cylinders become unstable and develop single bulges. In the right picture, there are graphic models developed in the MIPKG, showing the different geometric shapes. We   observe that  the surfaces of graphics (III) and (IV)    present symmetries  with respect to longitudinal   orthogonal planes. (Reprinted by courtesy of R. Lipowsky).}\label{f-li}
\end{figure}
  The second scenario in this article is the study of constant mean curvature surfaces in a wedge with
  Dirichlet and capillary conditions in each edge of the wedge, respectively. See Fig. \ref{f-ha}. Again, we focus in recent experiments  in melting processes realized in MIPKG  (\cite{kmlr}). Let a liquid be in a right angle wedge $W$ defined by two planes  $P_1\cup P_2$ and with axis $L=P_1\cap P_2$. Instead of $P_1$, we only consider an infinite strip $\tilde{P_1}\subset P_1$ of finite width being $L$ one of its boundary components. Let $\partial \tilde{P_1}=L\cup L_1$. One deposits a liquid droplet in $W$ close to  the axis $L$. We place  more liquid in such way that the liquid spreads in $W$ attaining  $L_1$ and we force that the liquid to be fixed in $L_1$, but that it can displace on $P_2$. In equilibrium, the first geometric configurations are pieces of circular cylinders, where one component of its boundary is $L_1$ and the other one moves freely on $P_2$, which it is a parallel straight-line $L_2$ to $L$. As we add more liquid, the boundary component of the free surface is pinned to $L_1$ (Dirichlet condition) whereas the other one remains in $P_2$ making contact angle (capillary condition). Experiments show that after a sufficiently amount of liquid deposited on the wedge, the cylindrical shape breaks its symmetries appearing bulges similarly as in the previous case: see  Fig. \ref{f-ha}, (IV). The new interfaces are surfaces $M$ with constant mean curvature included in the wedge with two boundary components, $\partial M=\Gamma_1\cup\Gamma_2$: the curve $\Gamma_1$ agrees with $L_1$ and the other component $\Gamma_2$ is a curve on the plane $P_2$ in such way that the interface $M$ makes constant angle with $P_2$ along $\Gamma_2$.

\begin{figure}[hbtp]
\begin{center}
  \includegraphics[width=.7\textwidth]{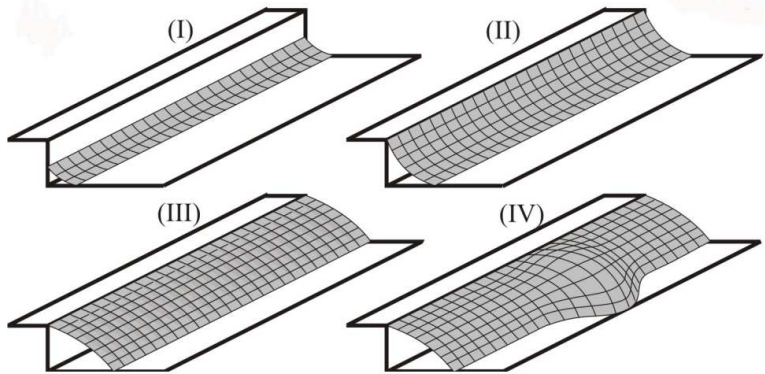}
\end{center}
\caption{Graphics models  obtained in MIPKG of a liquid deposited in a right wedge. At the initial stages and when the volume deposited is  small, the shape of liquid drop is cylindrical ((I) and (II)). Once reached the top in the vertical plane, the liquid begins  spreading on the horizontal plane making constant contact angle  (III). Here the cylinder is convex until that the added liquid is sufficiently big so the morphologies exhibit bulges (IV). (Reprinted by courtesy of R. Lipowsky).}\label{f-ha}
\end{figure}

In the above two settings, the first circular cylindrical liquids are stable under small perturbations of liquid. Stability implies uniqueness of morphologies in the sense that as we add liquid, the new surfaces obtained, which have constant mean curvature (possibly with different  values of mean curvature), are the only ones possible.  The authors analyze in \cite{bl} the bifurcation based on a number of numerical  diagrams relating the contact angle with the  volume of the liquid drop.

 In this article we realize a mathematical proof of such evidences using bifurcation theory. Exactly, we show:

\begin{theorem}\label{t1}
 Let   $\gamma\in (\pi/2,\pi)$. Consider the strip $\Omega=\{(x,y,0);-a\leq y\leq a\}$ and $\partial\Omega=L_1\cup L_2$. Denote $C(r,\gamma)$ a piece of a non-bounded cylinder of radius $r$ with boundary $\partial\Omega$ and making a contact angle $\gamma$ with $P$. Then there exists $T>0$, whose value is
 $$T=\frac{4\pi r\gamma}{\sqrt{4\gamma^2-\pi^2}},$$
 such that the cylinder $C(r,\gamma)$ bifurcates in a family of   non-rotational surfaces with constant mean curvature and  whose boundary is   $L_1\cup L_2$. These surfaces are periodic in the $x$-direction with period $T$.
\end{theorem}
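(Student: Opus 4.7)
The plan is to realize Theorem~\ref{t1} as a Crandall--Rabinowitz bifurcation from the trivial branch consisting of the cylinder $C(r,\gamma)$ itself, with bifurcation parameter the period $T$. Parametrize $C(r,\gamma)$ as $X(x,s)=(x,f(s),g(s))$ with $s\in[-r\gamma,r\gamma]$ arclength along the circular cross-section, and write nearby surfaces as normal graphs $X_u=X+uN$. Since the boundary $L_1\cup L_2$ of $X_u$ has to coincide with that of $X$ and the outward normal $N$ at $s=\pm r\gamma$ is perpendicular to these lines, $u$ must vanish at $s=\pm r\gamma$. After the rescaling $\bar x=x/T$ the functions live on a $T$-independent parameter strip; set
$$\mathcal X=\Bigl\{u\in C^{2,\alpha}:\ u(\bar x+1,s)=u(\bar x,s),\ u(\bar x,\pm r\gamma)=0,\ u(-\bar x,s)=u(\bar x,s)=u(\bar x,-s),\ \textstyle\int_0^1 u(\bar x,s)\,d\bar x=0\Bigr\}.$$
The two reflection symmetries select the single-bulge morphology observed in Fig.~\ref{f-li}, and the zero $\bar x$-mean condition eliminates the $\bar x$-independent directions, which merely reproduce the trivial family of round cylinders with boundary $L_1\cup L_2$. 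With $\mathcal Y\subset C^{0,\alpha}$ the analogous zero-mean space, define
$$F:(0,\infty)\times\mathcal X\to\mathcal Y,\qquad F(T,u)=H[X_u]-\overline{H[X_u]},$$
so that $F(T,u)=0$ if and only if $X_u$ has constant mean curvature; the cylinder is the trivial branch $F(T,0)\equiv 0$.

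The linearization is the Jacobi operator of $C(r,\gamma)$,
$$L_T u:=F_u(T,0)u=\frac{1}{T^2}u_{\bar x\bar x}+u_{ss}+\frac{1}{r^2}u,$$
the average subtraction being invisible on $\mathcal X$ because $\int u\,d\bar x=0$. The Fourier basis $u_{n,m}=\cos(2\pi n\bar x)\cos\bigl((2m-1)\pi s/(2r\gamma)\bigr)$, $n,m\geq 1$, diagonalizes $L_T$ on $\mathcal X$ with eigenvalues
$$\lambda_{n,m}(T)=\frac{1}{r^2}-\frac{(2m-1)^2\pi^2}{4r^2\gamma^2}-\frac{4\pi^2 n^2}{T^2}.$$
A kernel element requires $\lambda_{n,m}(T)=0$, equivalently $(2m-1)^2\pi^2<4\gamma^2$; for $\gamma\in(\pi/2,\pi)$ this forces $m=1$. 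The resonances are then $T=nT^\star$ with
$$T^\star=\frac{4\pi r\gamma}{\sqrt{4\gamma^2-\pi^2}},$$
which matches the formula in the theorem. At $T=T^\star$ the kernel is one-dimensional, spanned by $\phi^\star(\bar x,s)=\cos(2\pi\bar x)\cos(\pi s/(2r\gamma))$, and standard elliptic Schauder theory makes $L_{T^\star}:\mathcal X\to\mathcal Y$ Fredholm of index zero.

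The Crandall--Rabinowitz transversality condition follows from
$$\partial_T L_T\,\phi^\star\Big|_{T=T^\star}=-\frac{2}{T^{\star 3}}\phi^\star_{\bar x\bar x}=\frac{8\pi^2}{T^{\star 3}}\,\phi^\star,$$
combined with the formal self-adjointness of $L_{T^\star}$, which gives $\phi^\star\perp\mathrm{Range}(L_{T^\star})$ in $\mathcal Y$. The Crandall--Rabinowitz theorem then produces a smooth curve $\epsilon\mapsto(T(\epsilon),u(\epsilon))$ of non-trivial zeros of $F$ with $T(0)=T^\star$ and $u(\epsilon)=\epsilon\phi^\star+o(\epsilon)$; the surfaces $X_{u(\epsilon)}$ are CMC, have boundary $L_1\cup L_2$, are periodic in $x$ with period close to $T^\star$, and are non-rotational because $\phi^\star$ depends non-trivially on $\bar x$.

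The main technical obstacle is the very first step: choosing $\mathcal X$, $\mathcal Y$ and $F$ so that $F$ is well-defined and $C^k$-smooth between them, verifying carefully that the Dirichlet condition $u=0$ at $s=\pm r\gamma$ really encodes $\partial X_u=L_1\cup L_2$ for a normal graph, and checking that subtracting $\overline{H[X_u]}$ correctly absorbs the free value of $H$ in the CMC equation so that $F_u(T^\star,0)$ is Fredholm with a genuinely simple kernel. Once this functional setting is in place, the spectral analysis and transversality computation above are explicit and Crandall--Rabinowitz delivers the bifurcating branch.
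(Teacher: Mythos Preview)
Your argument is correct and follows the same Crandall--Rabinowitz scheme as the paper, but with a genuinely different choice of bifurcation parameter: you vary the period $T$ along a trivial branch consisting of the \emph{fixed} cylinder $C(r,\gamma)$, whereas the paper fixes the period at the value $T$ stated in the theorem and uses the mean curvature $H$ as parameter, the trivial branch being the one-parameter family of round cylinders sharing the boundary $L_1\cup L_2$. Your setup is cleaner in one respect: the trivial branch is literally $u=0$, so you avoid having to write nearby cylinders as normal graphs over a reference one (and the attendant domain-change issues that the paper glosses over). You also treat the simplicity of the kernel more carefully, imposing the two reflection symmetries explicitly rather than invoking the paper's somewhat informal phase-shift argument to discard the cosine modes. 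The price you pay is twofold. First, your nonlinear map $F(T,u)=H[X_u]-\overline{H[X_u]}$ must land in the symmetric zero-mean space $\mathcal Y$; this is fine for the symmetries (they are inherited from $u$) and for the overall zero mean (by construction), but you should be explicit that ``zero mean'' in $\mathcal Y$ means zero \emph{total} average, not zero $\bar x$-average as in $\mathcal X$, since the latter need not be preserved by the nonlinear $H$. Second, and more relevant to the theorem as stated, your bifurcating surfaces have period $T(\epsilon)$ \emph{near} $T^\star$, not exactly $T^\star$; the paper's choice of $H$ as parameter produces a branch of surfaces all with period exactly $T^\star$ and varying mean curvature, which matches the wording of Theorem~\ref{t1} precisely. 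The two branches are different slices of the same local family, so your result is equivalent in substance, but if you want the period fixed at $T^\star$ you should switch to $H$ as the parameter.
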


\begin{theorem}\label{t2} Let $P_1$ and $P_2$ be two orthogonal planes, $W$ one of the quadrants determined by $P_1\cup P_2$ and $L=P_1\cap P_2$. Let $L_1\subset P_1$ be a straight-line parallel to $L$, $\gamma\in (0,\pi)$ and denote by $C(r,\gamma)$ a piece of a circular cylinder of radius $r$ included in $W$ bounded by two parallel straight-lines where one  is $L_1$, the other one lies in $P_2$ and the cylinder makes a contact angle $\gamma$ with
$P_2$.   Given a convex cylinder $C(r,\gamma)$, there exists $T>0$ such that
 the cylinder $C(r,\gamma)$ bifurcates in a family of   surfaces   with constant mean curvature contained in $W$ with two boundary components: one of them is $L_1$ and the other one lies in $P_2$ in such way that  the surfaces  make with  $P_2$ a contact angle $\gamma$ along this component. Moreover these surfaces are periodic in the direction of the axis of $W$ and the period is $T$.
\end{theorem}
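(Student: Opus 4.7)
The plan is to follow the same Crandall--Rabinowitz strategy used in Theorem \ref{t1}, adapting the functional-analytic setup to the mixed boundary conditions imposed by the wedge geometry. Choose coordinates so that $L$ is the $x$-axis, $P_1=\{z=0\}$, $P_2=\{y=0\}$ and $W=\{y\ge 0,\,z\ge 0\}$, with $L_1=\{z=0,\,y=y_0\}$ for some $y_0>0$. Parametrize the cross-section of $C(r,\gamma)$ by arc length as a circular arc $\alpha:[0,\ell]\to\{x=0\}\cap W$ with $\alpha(0)\in L_1$ and $\alpha(\ell)\in P_2$, satisfying $\langle\alpha'(\ell),\nu_{P_2}\rangle=\cos\gamma$. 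The convexity hypothesis on $C(r,\gamma)$ translates to $\ell<\pi r$, i.e.\ the arc is shorter than a half circle; this will be crucial for the sign of the curvature term in the Jacobi operator.

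Next I would write nearby surfaces as normal graphs $\Sigma_u=\{\alpha(s)+u(x,s)\,N(s):(x,s)\in\r\times[0,\ell]\}$ with $u$ of period $T$ in $x$, and define
$$F:(0,\infty)\times X_T\longrightarrow Y_T,\qquad F(T,u)=\bigl(2H[\Sigma_u]-2/r,\;B_1 u,\;B_2 u\bigr),$$
where $X_T$ is a $C^{2,\alpha}$ space of $T$-periodic functions encoding the Dirichlet condition $u|_{s=0}=0$ (pinning to $L_1$) and the linearized capillary condition at $s=\ell$. Linearizing around $u\equiv 0$ yields the Jacobi operator $L=\partial_x^2+\partial_s^2+r^{-2}$ on the strip $[0,T]\times[0,\ell]$, supplemented with $u=0$ at $s=0$ and a Robin-type condition $\partial_s u-\kappa_{P_2}\,u=0$ at $s=\ell$, where the coefficient comes from differentiating the capillarity identity (since $P_2$ is a plane, the boundary term reduces to a clean Robin condition with coefficient determined by $\gamma$ and $r$).

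Using separation of variables $u(x,s)=\cos(2\pi k x/T)\,\phi(s)$ with $k=1$, the problem reduces to an ODE $\phi''+(r^{-2}-(2\pi/T)^2)\phi=0$ with the mixed boundary conditions above. For $T$ in the range of interest we have $r^{-2}-(2\pi/T)^2<0$, so solutions are hyperbolic/trigonometric; imposing the two boundary conditions yields a single transcendental equation $G(T;r,\gamma)=0$ whose smallest positive root is the candidate bifurcation period $T^*$. Differentiating $G$ in $T$ gives a one-dimensional kernel and identifies the eigenfunction $\phi^*$; the explicit form of $T$ follows, matching in form the $4\pi r\gamma/\sqrt{4\gamma^2-\pi^2}$ expression after the appropriate change in the boundary conditions.

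Finally I would verify the two remaining Crandall--Rabinowitz hypotheses: that $\ker D_u F(T^*,0)$ is one-dimensional (simple eigenvalue, which follows from the Sturm oscillation count because $\ell<\pi r$ keeps $\phi^*$ with the correct nodal structure) and that the transversality condition
$$D^2_{T,u}F(T^*,0)[1,\phi^*]\notin \mathrm{Range}\bigl(D_u F(T^*,0)\bigr)$$
holds; the latter reduces to showing $\partial_T G(T^*;r,\gamma)\neq 0$, which is a direct computation from the characteristic equation. The main obstacle is handling the asymmetric boundary conditions cleanly: unlike Theorem \ref{t1}, where both sides are capillary and a reflection symmetry simplifies the spectral problem, here the Dirichlet/Robin split breaks this symmetry, so the eigenfunctions are no longer pure cosines in $s$ and one must rule out spurious zeros of $G$ that would destroy simplicity. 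Once this is done, Crandall--Rabinowitz gives the advertised bifurcating branch of CMC surfaces with the prescribed boundary conditions and period $T$.
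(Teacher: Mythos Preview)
Your overall strategy---Crandall--Rabinowitz applied to the Jacobi operator via separation of variables on the strip, with Dirichlet data on $L_1$ and a Robin condition on the $P_2$-side---is exactly the paper's. But two substantive points diverge from the paper and need repair. First, the paper takes the mean curvature $H$ (equivalently the radius $r$) as the bifurcation parameter, not the period $T$: one fixes $T$, defines $F(u,H)=(2(H-H_u),\gamma_u-\gamma)$ on $T$-periodic functions, and checks $D_H D_u F(0,H_0)(u_0)\notin \mathrm{Im}\,D_uF(0,H_0)$ using the solvability criterion of Vogel's lemma (the image is characterized by $\int_M u_0\varphi_1\,dM-\int_{\Gamma_2}u_0\varphi_2\,ds=0$). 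Your transversality test $\partial_T G\neq 0$ yields instead a branch along which the period varies and $H$ stays equal to $1/(2r)$, which is not what the theorem asserts (it claims a family with the \emph{same} period $T$). Second, the paper does not get a single transcendental equation: the sign of $C=r^2(r^{-2}-4\pi^2/T^2)$ genuinely depends on $(\gamma,\beta)$, and the proof splits into the cases $\gamma=\pi/2$ (trigonometric, $T=4\pi r\beta/\sqrt{4\beta^2-\pi^2}$), $\gamma<\pi/2$ with $\beta>\tan\gamma$ (hyperbolic, $T=2\pi r/\sqrt{1+c^2}$), $\gamma<\pi/2$ with $\beta=\tan\gamma$ (linear, $T=2\pi r$), and the remaining trigonometric case $T=2\pi r/\sqrt{1-c^2}$. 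Your claim that ``$r^{-2}-(2\pi/T)^2<0$ in the range of interest'' covers only one of these.

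Two smaller corrections: convexity here does \emph{not} mean the arc is shorter than a half-circle---the paper's convex cylinders allow $\beta>\pi/2$ (indeed the bifurcation for $\gamma=\pi/2$ requires $\beta>\pi/2$), so your use of $\ell<\pi r$ to control the nodal structure is unfounded; and in Theorem~\ref{t1} both boundary components are Dirichlet, not capillary, so the ``reflection symmetry'' you invoke there comes from the two identical Dirichlet conditions, not from a capillary pairing.
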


Both results give us a new curve of solutions   as a parameter of the mean curvature. One branch is formed by appropriate pieces of cylinders with the same boundary conditions and the other one by the new surfaces that appear in above theorems. In our results, the new surfaces obtained by a bifurcation argument are not rotational because they contain two straight-lines and the only rotational surfaces with constant mean curvature including straight-lines are cylinders.

The existence of new surfaces  must occur when the stability of the cylinder fails. The first step in the analysis of a given bifurcation is to establish that a bifurcation has taken place. It is the case that if a known solution loses stability as a given parameter is varied. This is the reason that we previously need to give an analysis of stability of pieces of cylinders bounded by two prescribed straight-lines (first setting) or by a fixed straight-line and the other one moves in a plane (second setting). A similar situation occurs in the case that the boundary of the cylinder is empty, that is, as a complete surface. A recent argument of bifurcation shows that the classical Plateau-Rayleigh instability criterio of the cylinder (\cite{ra}) implies the existence of new periodic constant mean curvature surfaces originated by cylinders, which must be rotational, that is, Delaunay surfaces (\cite{ss}).

One of the first results on bifurcation of surfaces with constant mean curvature appeared in  \cite{vo}, where Vogel considered similar problems assuming cylinders in (non necessary right) wedges and whose two boundary components satisfy a capillary condition. Exactly, it is assumed that the contact angle with the edges of the wedge is constant and with the same value of angle and it was showed the existence of non-rotational configurations. Next, we point out the Ph. D. thesis of Patnaik (\cite{pa}) advised by Wente. In this work it is  considered the problem to find surfaces with minimum area enclosing a volume $V>0$ and whose boundary is formed by two prescribed coaxial circles in parallel planes. It is proved that for each $V$ there exists an area-minimizing surface, in particular, it is a surface with constant mean curvature. When the volume $V$ is small, the surface is rotationally symmetric, but if the volume of the surface increases until a critical volume,   new non-rotational surfaces are obtained  which  develop bulges again. Numerical graphics of such surfaces appear in \cite{ho}. More recently it has been studied problems of bifurcation in the theory of surfaces with constant mean curvature: \cite{ap,gh,jl,kpp,mp,ro}.   Special attention has received the bifurcation of (pieces of) nodoids (\cite{gh,kpp,mp,ro}).

In physics literature, the bifurcation from cylinders has been studied in \cite{bs} using an finite-element analysis.   In a more general context, studies on bifurcation have been realized for rotating liquid drops, that is, rigidly  liquid drops which rotate with constant angular velocity $\omega$ about an axis $L$. In this case, the interface is a surface whose mean curvature is a linear function on the square of the distance $d$ to the given axis $L$: $H=\kappa\omega^2 d^2+c$, for some real numbers $\kappa$ and $c$. An example of a such surface is a circular cylinder with axis $L$. If $r$ is the radius of the cylinder, its mean curvature $H$ satisfies $H=\kappa\omega^2 d^2+c$, with $\kappa\omega^2=1/(2r^3)$ and $c=0$. Because one can also choose $\omega=0$ and $c=1/(2r)$, a cylinder can be viewed as both a surface with constant mean curvature or the interface of a rotating liquid drop. Depending on the different assumptions on the boundary conditions, some authors have investigated the stability and bifurcation of rotating liquid drops from pieces of cylinders: \cite{kr,ks,kms,lm,ub}.

This article is organized as follows. In Section \ref{s2} we give the definition of stability of a surface with constant mean curvature. In Section \ref{s3} we study the stability of pieces of cylinders bounded by two given straight-lines which allows to show Theorem \ref{t1} in Section \ref{s4}. Next in Section \ref{s5} we analyze  the stability of pieces of cylinders in the second setting, showing Theorem \ref{t2} in Section \ref{s6}.

\section{Stability of surfaces with constant mean curvature}\label{s2}

In this section we recall some definitions and basic facts on the stability of constant mean curvature surfaces in Euclidean space. We refer to the reader to \cite{bce,rv}. Consider $\phi:M\rightarrow\r^3$ an immersion of a compact orientable surface $M$. A variation of $\phi$ is a differentiable map $\Phi:M\times(-\epsilon,\epsilon)\rightarrow\r^3$, $\epsilon>0$, such that $\phi_t:M\rightarrow\r^3$ defined by $\phi_t(p)=\Phi(p,t)$, $p\in M$ is an immersion for any $t\in (-\epsilon, \epsilon)$, and $\phi_0=\phi$. Associated with the variation $\Phi$, we define the area functional $A:(-\epsilon,\epsilon)\rightarrow\r$ by
$$A(t)=\int_M dA_t,$$
where $dA_t$ is the area element of $M$ with the induced metric by $\phi_t$, and the volume functional $V:(-\epsilon,\epsilon)\rightarrow\r$   by
$$V(t)=\int_{M\times[0,\epsilon]}\Phi^*(dV),$$
 where $\Phi^*(dV)$ is the pullback of the Euclidean volume element $dV$. The number $V(t)$ represents the signed volume enclosed between the surfaces $\phi$ and $\phi_t$.  The variation is called volume preserving if $V(t)=V(0)$ for all $t$.    The variational vector field of $\Phi$ is defined by
 $$\xi(p)=\frac{\partial\Phi}{\partial t}(p){\Big|}_{t=0}.$$
A variation $\Phi$ is called normal if $\xi=uN$ for some function $u$. We shall consider variations of $\phi$ that fix some components of $\partial M$ and the other ones, move in a given support. Because the two settings appeared in Introduction,  we consider surfaces whose boundary has two components $\Gamma_1$ and $\Gamma_2$.  Consider $\Pi$ an embedded connected surface in   $\r^3$ that divides the space into components and let us fix one of them, denoted by $W$. Let $\partial M=\Gamma_1\cup\Gamma_2$ be a decomposition into components, where $\Gamma_1$ is the part of the boundary that is pointwise fixed and $\Gamma_2$ the one that moves in the support $\Pi$. We say that $\Phi$ is an admissible variation of $\phi$ if $\phi_t(\mbox{int}(M))\subset W$, $\phi_t|_{\Gamma_1}=\phi|_{\Gamma_1}$  and $\phi_t(\Gamma_2)\subset\Pi$.

 Fix $\gamma\in (0,\pi)$.   Given an admissible variation $\Phi$,    the energy functional $E:(-\epsilon,\epsilon)\rightarrow\r$ is defined by $E(t)=A(t)-\cos\gamma S(t)$, where $S(t)$ is the area of the part $\Omega$ of $\Pi$ bounded by $\phi_t(\Gamma_2)$.   Let $N$ be a unit normal vector field along $\phi$ that points into the domain determined by $\phi(M)$ and $\Omega$ and let $\tilde{N}$ be the  unit normal vector to $\Pi$ pointing outside. Let $\nu$ (resp. $\bar{\nu}$) denote the unit exterior normal vectors to $\Gamma_2$ in $M$ (resp. in $\Omega$) and $H$ is the mean curvature of $\phi$. The first variation formulae for the energy $E$ and for the volume $V$ are
\begin{eqnarray*}
E'(0)&=&-2\int_M H u\ dM+\int_{\Gamma_2}\langle\xi,\nu-\cos\gamma\bar{\nu}\rangle\ ds\\
&=&-2\int_M H u\ dM+\int_{\Gamma_2}\langle\xi,\tilde{\nu}\rangle(\langle N,\tilde{N}\rangle-\cos\gamma)\ ds,\\
V'(0)&=&\int_M u\ dM,
\end{eqnarray*}
where $u=\langle N,\xi\rangle$ and $ds$ is the induced arc-length on $\partial M$. We say that the immersion $\phi$ is stationary if $A'(0)=0$ for any volume-preserving admissible variation of $\phi$. Using the above expression of $A'(0)$ and $V'(0)$, the immersion  $\phi$ is stationary is and only if $\phi$ has constant mean curvature and intersects $\Pi$ with constant  angle $\gamma$ along $\Gamma_2$, that is, $\langle N,\tilde{N}\rangle=\cos\gamma$ along $\Gamma_2$.

Denote by $\sigma$ and $\tilde{\sigma}$ the second fundamental form of $\phi:M\rightarrow\r^3$ and $\Pi\hookrightarrow \r^3$ with respect to $N$ and $-\tilde{N}$ respectively. For each smooth function $u$ on $M$ with $\int_M u\ dM=0$ there exists an admissible normal volume-preserving variation of $\phi$ with variational vector field $uN$. The second variation of $E$ is
$$E''(0)=-\int_M u(\Delta u+|\sigma|^2 u)\ dM+\int_{\Gamma_2}
u\Big(\frac{\partial u}{\partial\nu}-qu\Big) ds,$$
where
$$q=\frac{1}{\sin\gamma}\tilde{\sigma}(\tilde{\nu},\tilde{\nu})+\cot\gamma\sigma(\nu,\nu),$$
 $\Delta$ stands for  the Laplacian operator of $M$ induced by $\phi$ and $|\sigma|^2$ is the square of the norm of $\sigma$, which in terms of mean curvature $H$ and Gaussian curvature $K$ is $|\sigma|^2=4H^2-2K$. The immersion $\phi$ is called stable if $E''(0)\geq 0$ for all volume-preserving admissible normal variations of $\phi$. The second variation $E''(0)$ defines an index form $I$, which is a bilinear form on $H_0^1(M)$:
$$I(u,v)=\int_M (\langle\nabla u,\nabla v\rangle-|\sigma|^2 uv)\ dM-\int_{\Gamma_2}quv\ ds.$$
Here $H_0^1(M)$ is the first Sobolev space, that is, the completion of $C_0^\infty(M)$, $C_0^\infty(M)$ is the space of smooth functions on $M$ that vanish on $\Gamma_1$ and $\nabla$ means the gradient operator for the metric induced by $\phi$. Thus a stationary immersion is stable if and only if $I(u,u)\geq 0$ for all $u\in H_0^1(M)$.

The eigenvalue problem corresponding to the quadratic form $I$ is:
\begin{equation}
\left\{\begin{array}{lll}\label{eq-eigen}
& Lu+\lambda u=0& \mbox{on $M$}\\
&u=0& \mbox{on }\Gamma_1\\
& \dfrac{\partial u}{\partial \nu}-qu=0& \mbox{on $\Gamma_2$}
\end{array}
\right.
\end{equation}
where $L:H_0^1(M)\rightarrow L^2(M)$ is defined by $Lu=\Delta u+|\sigma|^2u$. The operator $L$ is the so-called  Jacobi operator. The next result is known (\cite{ch,ko}):
\begin{lemma}\label{le-eigen}
There exists a countable set of eigenvalues $\lambda_1<\lambda_2\leq\ldots$, with $\lambda_n\rightarrow +\infty$ as $n\rightarrow+\infty$. Moreover,
\begin{enumerate}
\item if $\lambda_1\geq 0$,   the immersion $\phi$ is stable.
\item if $\lambda_2<0$,   the immersion $\phi$ is unstable.
\end{enumerate}
Denote by $E_{\lambda}$ the vector subspace of the eigenfunctions of the eigenvalue $\lambda$ in \eqref{eq-eigen}. Then $L^2(M)=\bigoplus_{n=1}^\infty E_{\lambda_n}$.

\end{lemma}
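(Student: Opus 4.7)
The plan is to treat the boundary value problem in \eqref{eq-eigen} as the spectral problem for a self-adjoint operator with compact resolvent, and then to read off stability/instability from the variational characterization of the eigenvalues.

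First I would introduce the symmetric bilinear form
\[
I_c(u,v)=I(u,v)+c\int_M uv\,dM
\]
on the Hilbert space $H_0^1(M)$, where $c>0$ is chosen large enough (depending on $\sup_M|\sigma|^2$, $\sup_{\Gamma_2}|q|$, and a trace inequality on $\Gamma_2$) that $I_c$ is coercive and equivalent to the usual $H^1$-norm. This is where one has to be a little careful: the boundary integral $\int_{\Gamma_2}quv\,ds$ is controlled by $\|u\|_{H^1(M)}\|v\|_{L^2(\Gamma_2)}$ via the trace theorem, and the $\epsilon$--$1/\epsilon$ trick absorbs it into the gradient term. Once $I_c$ is coercive, the Lax–Milgram theorem produces a bounded linear operator $G:L^2(M)\to H_0^1(M)$ that inverts the boundary value problem $(-L+c)u=f$ with the mixed conditions $u|_{\Gamma_1}=0$ and $\partial_\nu u-qu=0$ on $\Gamma_2$ in the weak sense.

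Composing with the compact embedding $H_0^1(M)\hookrightarrow L^2(M)$ given by the Rellich–Kondrachov theorem, the resolvent $T:=G:L^2(M)\to L^2(M)$ is compact; it is also self-adjoint because $I_c$ is symmetric. The spectral theorem for compact self-adjoint operators then gives a complete $L^2$-orthonormal basis $\{e_n\}$ of eigenfunctions $Te_n=\mu_n e_n$ with $\mu_n\to 0^+$. Setting $\lambda_n=1/\mu_n-c$ yields the claimed sequence $\lambda_1<\lambda_2\leq\cdots\to+\infty$ (strict inequality in the first gap comes from the standard fact that the lowest eigenfunction of a second-order elliptic operator with these boundary conditions is simple, by a maximum-principle argument applied to $|e_1|$). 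Elliptic regularity up to the boundary, in the interior and at the Robin portion $\Gamma_2$, upgrades weak eigenfunctions to classical solutions of \eqref{eq-eigen}, and the $L^2$ decomposition $L^2(M)=\bigoplus_n E_{\lambda_n}$ is just the spectral decomposition of $T$.

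Finally, for the stability conclusions I would use the min-max characterization
\[
\lambda_k=\min_{\substack{V\subset H_0^1(M)\\\dim V=k}}\max_{u\in V\setminus\{0\}}\frac{I(u,u)}{\int_M u^2\,dM}.
\]
If $\lambda_1\geq 0$, then $I(u,u)\geq 0$ for every $u\in H_0^1(M)$, hence in particular for every admissible test function with $\int_M u\,dM=0$, so $\phi$ is stable. If $\lambda_2<0$, pick the two-dimensional space $V=\mathrm{span}\{e_1,e_2\}$ of eigenfunctions associated with $\lambda_1,\lambda_2$; the linear functional $u\mapsto\int_M u\,dM$ has a nontrivial kernel in $V$, and any nonzero $u$ in that kernel satisfies $\int_M u\,dM=0$ and $I(u,u)\leq\lambda_2\int_M u^2\,dM<0$, giving a volume-preserving admissible direction that decreases the energy, so $\phi$ is unstable. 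The main obstacle is really only bookkeeping: verifying coercivity of $I_c$ with the trace term and checking that the weak Robin condition recovered from the Lax–Milgram formulation agrees with the classical one in \eqref{eq-eigen}; once those standard ingredients are in place the rest is routine spectral theory.
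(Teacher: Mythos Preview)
Your argument is correct and is the standard route to this kind of result: shift the form to make it coercive, invoke Lax--Milgram and Rellich--Kondrachov to get a compact self-adjoint resolvent, then read off the spectrum and the $L^2$ decomposition; the stability claims follow from the Rayleigh quotient and the two-dimensional trick you describe. There is nothing to correct.

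The paper, however, gives no proof at all: the lemma is stated as known and attributed to the references \cite{ch,ko} (Courant--Hilbert and Koiso). So there is no ``paper's own proof'' to compare against. What you have written is essentially the argument one finds in those references, specialized to the mixed Dirichlet/Robin problem at hand; it is a faithful reconstruction rather than an alternative approach.
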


\section{Stability of pieces of cylinders resting on a horizontal plane}\label{s3}

The Plateau-Rayleigh stability condition, experimented by Plateau, asserts that a cylinder of circular cross section of radius $r>0$ and bounded by two circles $h>0$ far apart  is stable if and only if $h< 2\pi r$ (\cite{ra}). In this section we consider the stability problem of a piece of a cylinder bounded by two straight-lines    resting in a  horizontal plane $P$. Some of computations that appear here are known in the literature. For example, the stability of surfaces of cylindrical geometry with capillary conditions and different settings was studied in \cite{vs} (see also references therein). See also a recent work  on the stability of cylinders focusing on the dynamics of the instability process (\cite{sl}).

Consider that $P$ is the plane of equation $z=0$, where $(x,y,z)$ are the usual coordinates of $\r^3$. Given $r>0$ and $\gamma\in (0,\pi)$, denote $C(r,\gamma)$ the piece of cylinder over $P$ whose boundary lies in $P$ and $C(r,\gamma)$ makes a contact angle $\gamma$ with $P$. This cylinder is described by
$$C(r,\gamma)=\{(x,y,z)-(0,0,r\cos\gamma)\in\r^3; y^2+z^2=r^2,  z\geq r\cos\gamma\}.$$

See Fig. \ref{fig3}. The boundary of this surface is formed by two  parallel straight-lines $L_1$ and $L_2$, namely,
$L_1\cup L_2=\{(x,\pm r\sin(\gamma),0);x\in\r\}$. This cylinder $C(r,\gamma)$  parametrizes as $\phi(t,s)=(t,r\cos(s),r\sin(s))-(0,0,r\cos\gamma)$ with  $s\in [\pi/2-\gamma,\pi/2+\gamma]$. If $\gamma=\pi/2$,  $C(r,\pi/2)$ is just a half-cylinder of radius $r$ resting on the plane $P$. The mean curvature of $C(r,\gamma)$ is  $H=1/(2r)$ computed with respect to the unit normal pointing to the convex domain bounded by $C(r,\gamma)$ and $P$. Denote $\Omega_\gamma=\{(x,y,0);-r\sin\gamma\leq y\leq r\sin\gamma\}\subset P$ the strip determined  by $\partial C(r,\gamma)$, with $\partial\Omega_\gamma=\partial
C(r,\gamma)=L_1\cup L_2$. Fix $W$ the upper half-space $z>0$. The normal $\tilde{N}$ of $P$ is $\tilde{N}=-(0,0,1)$. In this setting, and following the notation of Section \ref{s2}, we consider surfaces where the boundary is $\Gamma_1\cup\Gamma_2$, with $\Gamma_1=L_1\cup L_2$ and $\Gamma_2=\emptyset$, that is, we only have boundary conditions of Dirichlet type.

\begin{figure}[hbtp]
\begin{center}
  \includegraphics[width=.8\textwidth]{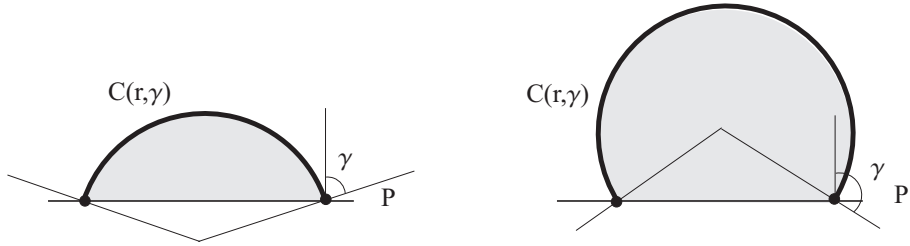}
\end{center}
\caption{Cross sections of cylinders resting on a horizontal plane $P$. On the left, the contact angle $\gamma$ satisfies $0<\gamma<\pi/2$; on the right we have $\pi/2<\gamma<\pi$.}\label{fig3}
\end{figure}

Because the cylinder $C(r,\gamma)$ is an unbounded surface, the stability of $C(r,\gamma)$ means stability for any compact subdomain of the cylinder. In our case, it is equivalent to consider the stability problem in truncated pieces $0\leq x\leq h$ of $C(r,\gamma)$ and to vary $h$. We consider the eigenvalue problem \eqref{eq-eigen} with $0$ as boundary data on $L_1\cup L_2$ and we use Lemma \ref{le-eigen}. We change $C(r,\gamma)$ by the rectangle $[0,h]\times [\pi/2-\gamma,\pi/2+\gamma]$ with variables $(t,s)$ and we use separation of variables.
Given a function $u=u(t,s)$,   we write $u$ as
\begin{equation}\label{uts}
u(t,s)=\sum_{n=1}^\infty g_n(s)\sin(\frac{n\pi}{h}t).
\end{equation}

As the function $u(t,s)$ vanishes in $s=\pi/2-\gamma$ and $s=\pi/2+\gamma$,   then $g_n(\pi/2-\gamma)=g_n(\pi/2+\gamma)=0$. We know the expression of $\Delta$ in cylindrical coordinates $(t,s)$ and because $K=0$, we have:
$$\Delta=\partial_{tt}+\frac{1}{r^2}\partial_{ss},\ \ |\sigma|^2=4H^2-2K=\frac{1}{r^2}.$$
In the eigenvalue problem \eqref{eq-eigen}, the first equation writes as
$$L(u)+\lambda u=\sum_{n=1}^\infty\Big(\frac{1}{r^2}g_n''(s)+(\frac{1}{r^2}-\frac{n^2\pi^2}{h^2}+\lambda)g_n(s)\Big)\sin(\frac{n\pi}{h}t).$$
Thus we have to solve
\begin{equation}\label{eq-g}
g_n''(s)+r^2\Big(\frac{1}{r^2}-\frac{n^2\pi^2}{h^2}+\lambda\Big)g_n(s)=0
\end{equation}
with boundary conditions
\begin{equation}\label{boundary1}
g_n(\frac{\pi}{2}-\gamma)=g_n(\frac{\pi}{2}+\gamma)=0.
\end{equation}
Set $C= r^2(\frac{1}{r^2}-\frac{n^2\pi^2}{h^2}+\lambda)$. We distinguish cases depending on the sign of $C$.
\begin{enumerate}
\item Case $C<0$. Let $c=\sqrt{-C}>0$.  The solution writes as $g_n(s)=A e^{cs}+B e^{-cs}$ for non-trivial constants $A$ and $B$. Equations \eqref{boundary1} are equivalent to
    $$Ae^{c(\frac{\pi}{2}-\gamma)}+Be^{-c(\frac{\pi}{2}-\gamma)}=Ae^{c(\frac{\pi}{2}+\gamma)}+Be^{-c(\frac{\pi}{2}+\gamma)}=0.$$
Combining both equations, we have $B^2=A^2e^{2c\pi}$ and
$$Ae^{c(\frac{\pi}{2}-\gamma)}(1\pm e^{2c\gamma})=0.$$
Then  $2\gamma=0$, which it is impossible.
\item Case $C=0$. Then $g_n(s)=As+B$, $A,B\in\r$. The boundary conditions \eqref{boundary1} give immediately a contradiction.
\item Case $C>0$. Let $c=\sqrt{C}>0$. Now $g_n(s)=A\cos(cs)+B\sin(cs)$, where $A,B\in\r$. The boundary conditions \eqref{boundary1} write respectively as
$$A\cos(c(\frac{\pi}{2}-\gamma))+B\sin(c(\frac{\pi}{2}-\gamma))=0.$$
$$A\cos(c(\frac{\pi}{2}+\gamma))+B\sin(c(\frac{\pi}{2}+\gamma))=0.$$
From the first equation we have $A=-\tan(c(\frac{\pi}{2}-\gamma))B$. Putting in the second one,
$\tan(c\pi)=\tan(c(\pi-2\gamma))$. This means that there exists $k\in\mathbb{Z}$ such that $c(\pi-2\gamma)=c\pi+k\pi$. Thus, there are non-trivial solutions $g_n$ of \eqref{eq-g} for some $n\in\mathbb{N}$ if and only if
$$c= \frac{k\pi}{2\gamma}$$
for some $k\in\mathbb{N}$ because $c>0$. From the value of $C$, we obtain explicitly all the eigenvalues of \eqref{eq-eigen}:
\begin{equation}\label{kn}
\lambda_{k,n}=\frac{1}{r^2}\Big(\frac{k^2\pi^2}{4\gamma^2}-1\Big)+\frac{n^2\pi^2}{h^2}.
\end{equation}
\end{enumerate}

We conclude:

\begin{proposition}\label{pr-stable}
\begin{enumerate}
\item If $\gamma\in (0,\pi/2]$, the  cylinder $C(r,\gamma)$ is stable.
\item Assume $\gamma\in(\pi/2,\pi)$. Consider a cylinder $C(r,\gamma)$ of length $h$. Then $\lambda_1\geq 0$  if and only if $h\leq h_0$, where
\begin{equation}\label{h-li}
h_0= \frac{2\pi r\gamma}{\sqrt{4\gamma^2-\pi^2}}.
\end{equation}
In such case, the surface is stable.
\item A cylinder $C(r,\gamma)$ with $\gamma\in(\pi/2,\pi)$ is unstable.
\end{enumerate}
\end{proposition}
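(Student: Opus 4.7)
The computations in the excerpt have in fact already produced, via separation of variables, the complete spectrum of the eigenvalue problem \eqref{eq-eigen} on the truncated rectangle $[0,h]\times[\pi/2-\gamma,\pi/2+\gamma]$ with Dirichlet data imposed on all four sides: the eigenvalues are exactly the numbers $\lambda_{k,n}$ of \eqref{kn} with $k,n\in\mathbb{N}$, and the associated products $\sin(n\pi t/h)\,g_{k,n}(s)$ form a complete orthogonal system in $L^2$. In particular the smallest eigenvalue is attained at $k=n=1$,
\[
\lambda_{1,1}=\frac{1}{r^2}\Bigl(\frac{\pi^2}{4\gamma^2}-1\Bigr)+\frac{\pi^2}{h^2},
\]
and the whole proposition reduces to analysing the sign of $\lambda_{1,1}$ (and, for the third item, also that of $\lambda_{1,2}$) and invoking Lemma \ref{le-eigen}.

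For statement (1), if $\gamma\in(0,\pi/2]$ then $\pi^2/(4\gamma^2)-1\geq 0$, so every truncation satisfies $\lambda_{1,1}>0$, and Lemma \ref{le-eigen}(1) yields stability of each compact subdomain, hence of the whole unbounded cylinder. For statement (2), with $\gamma\in(\pi/2,\pi)$ the bracketed term is strictly negative, and the inequality $\lambda_{1,1}\geq 0$ is equivalent to $\pi^2/h^2\geq (1-\pi^2/(4\gamma^2))/r^2$, which after an elementary rearrangement is exactly $h\leq h_0$ with $h_0$ given in \eqref{h-li}; Lemma \ref{le-eigen}(1) then furnishes stability.

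The main delicate point is statement (3): one might be tempted to conclude instability from $\lambda_{1,1}<0$ alone, but stability is taken with respect to \emph{volume-preserving} variations and Lemma \ref{le-eigen}(2) requires $\lambda_2<0$, not just $\lambda_1<0$. I would handle this by observing that for fixed $\gamma\in(\pi/2,\pi)$ the inequality $\lambda_{1,n}<0$ is equivalent to $h>n\,h_0$, so choosing any truncation with $h>2h_0$ forces both $\lambda_{1,1}<0$ and $\lambda_{1,2}<0$. Since the eigenfunctions $\sin(\pi t/h)\,g_{1,1}(s)$ and $\sin(2\pi t/h)\,g_{1,2}(s)$ are linearly independent and correspond to distinct negative eigenvalues, the second eigenvalue of this truncated problem satisfies $\lambda_2\leq\lambda_{1,2}<0$, and Lemma \ref{le-eigen}(2) delivers instability of that compact piece, hence of the unbounded cylinder.
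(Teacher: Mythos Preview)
Your proof is correct and follows essentially the same approach as the paper: read off the eigenvalues from \eqref{kn}, check the sign of $\lambda_{1,1}$ for items (1) and (2), and for item (3) take $h$ large enough that at least two of the $\lambda_{k,n}$ become negative so that Lemma \ref{le-eigen}(2) applies. Your treatment of item (3) is in fact more careful than the paper's, which simply observes that for $h$ sufficiently large ``many'' of the $\lambda_{k,n}$ are negative; your explicit choice $h>2h_0$ and the observation that $\lambda_{1,1},\lambda_{1,2}$ are distinct negative eigenvalues with independent eigenfunctions makes the invocation of $\lambda_2<0$ completely transparent.
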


\begin{proof} If $\gamma\in (0,\pi/2]$ and from \eqref{kn}, we have $\lambda_{k,n}\geq 0$ for any $h$. Then Lemma \ref{le-eigen} implies that $C(r,\gamma)$ is stable. If $\gamma\in(\pi/2,\pi)$,  we know from \eqref{kn} that the first eigenvalue   corresponds with $\lambda_{1,1}$. Then $\lambda_{1,1}\geq 0$ if and only if $h\leq h_0$ and Lemma \ref{le-eigen} implies stability. Moreover, if $\gamma>\pi/2$ and if $h$ is sufficiently big, the value of $\lambda_{k,n}$ in \eqref{kn} is negative for  many values of $k$ and $n$. Then Lemma \ref{le-eigen} assures that $C(r,\gamma)$ is unstable.
\end{proof}

\section{Proof of Theorem \ref{t1}}\label{s4}

The proof uses the  standard theory for bifurcation problems with a one-dimensional null space of Crandall and Rabinowitz (\cite{cr}). Let $\phi:M\rightarrow\r^3$ be an immersion with constant mean curvature $H_0$.  Let $V$ be an open of $0\in C_{0}^{2,\alpha}(M)$ such that for any $u\in V$, the normal graph $\phi_u:M\rightarrow\r^3$ defined by $\phi_u=\phi+uN$ is an immersion. Denote $H(u)$ the mean curvature of $\phi_u$ and define the map $F:V\times\r\rightarrow C^{\alpha}(M)$ by
$$F(u,H)=2(H-H(u)).$$
We see that $F(0,H_0)=0$. Moreover, the immersion $\phi_u$ has constant mean curvature if and only if there exists $H\in\r$ such that
\begin{equation}\label{eq-f}
F(u,H)=0.
\end{equation}

The next result is known in the literature (for example, \cite{ka,ko,vo}):

\begin{lemma} The functional $F$ is Fr\'echet differentiable with respect $u$ and $H$. The partial with respect to the first variable $u$ is
$$D_u F(0,H)v=-L(v),   v\in C^2_0(M),$$
where $L$ is the Jacobi operator.
\end{lemma}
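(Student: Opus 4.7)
The plan is to split the differentiability question into the two variables. Since $F$ is affine in $H$, Fréchet differentiability in $H$ is immediate and $D_H F(u,H)[s] = 2s$; all of the substantive content concerns the $u$-variable. So I need to show that $u \mapsto H(u)$ is a Fréchet-differentiable map from a small $C^{2,\alpha}_0$-neighborhood of $0$ into $C^\alpha(M)$, and then compute its derivative at $u=0$.

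For the differentiability, I would work in a local chart on $M$ and write the first and second fundamental forms of the normal graph $\phi_u = \phi + uN$ in terms of $u$, $\partial u$, $\partial^2 u$ and the fixed geometric data of $\phi$. Both expressions are smooth: the metric and its inverse are rational functions of $u$ and $\partial u$ whose denominator stays bounded away from zero for $u$ small in $C^{2,\alpha}$, while the second fundamental form depends affinely on $\partial^2 u$ with coefficients smooth in $(u, \partial u)$. Tracing with the inverse metric realizes $H(u)$ as a smooth quasilinear second-order expression in the $2$-jet of $u$, so the induced Nemytskii map is Fréchet differentiable from $C^{2,\alpha}_0(M)$ into $C^\alpha(M)$ by the standard composition lemma for Hölder spaces; the Dirichlet condition on $\Gamma_1$ is preserved automatically since the chart expression is local.

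To identify the derivative at the base point, I would differentiate along the line $\phi_{tv} = \phi + tvN$ and invoke the classical first variation of mean curvature under a normal variation with variational field $vN$:
$$\frac{d}{dt}\bigg|_{t=0} H(tv) = \tfrac{1}{2}\bigl(\Delta v + |\sigma|^2 v\bigr) = \tfrac{1}{2} L(v).$$
This identity follows from the first-order expansion $\dot g_{ij}|_0 = -2v\,\sigma_{ij}$ together with the analogous expansion of $\sigma_{ij}(u)$, whose trace against the unperturbed metric contributes $\Delta v + |\sigma|^2 v$ after one uses the Gauss and Weingarten formulas for $\phi$. Combining with the chain rule gives
$$D_u F(0,H_0)v = -2\cdot\tfrac{1}{2}L(v) = -L(v),$$
as claimed.

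The main obstacle is the routine but delicate bookkeeping in expanding $\sigma_{ij}(u)$ to first order, which requires contracting the Hessian of $u$ with the unperturbed metric and correctly collecting the curvature terms that arise when $N(u)$ and the induced metric are differentiated simultaneously. All of this is classical and is documented in the references \cite{ka,ko,vo} cited by the author, so the task here is really to organize the computation cleanly rather than to discover anything new.
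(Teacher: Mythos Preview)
Your sketch is correct and follows the standard route. Note, however, that the paper does not actually prove this lemma: it simply states the result as ``known in the literature'' and defers to the references \cite{ka,ko,vo}. Your outline---establishing Fr\'echet differentiability of $u\mapsto H(u)$ via the quasilinear structure of the mean curvature operator in local charts (so that the associated Nemytskii map is smooth between the relevant H\"older spaces), and then identifying the linearization at $u=0$ through the classical first variation formula $\dot H = \tfrac{1}{2}(\Delta v + |\sigma|^2 v)$---is exactly the argument one finds in those sources, so there is nothing substantive to compare. The only cosmetic remark is that your comment about the Dirichlet condition being ``preserved automatically'' is slightly misplaced: the subscript $0$ records a constraint on the domain $C^{2,\alpha}_0(M)$, not on the target $C^\alpha(M)$, so no preservation is at issue.
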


We also need the next result about the solvability of the equation $\lambda u-L(u)=f$ (\cite{ko}):

\begin{lemma}\label{le-ko} Let $\phi:M\rightarrow\r^3$ be an immersion. Given $\lambda\in\r$ and  $f\in L^2(M)$, we consider the equation
$$\lambda u-L(u)=f,\ u\in H^1_0(M).$$
\begin{enumerate}
\item If $\lambda$ is not an eigenvalue of \eqref{eq-eigen}, there is a unique solution.
\item If $\lambda$ is an eigenvalue of \eqref{eq-eigen}, there is a solution if and only $f$ is $L^2$-orthogonal to $E_\lambda$.
\end{enumerate}
\end{lemma}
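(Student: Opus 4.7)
The plan is to apply the Fredholm alternative for a self-adjoint operator with compact resolvent. Regard $L$ as an unbounded operator on $L^2(M)$ with domain built from the boundary conditions in \eqref{eq-eigen}: $u=0$ on $\Gamma_1$ and $\partial u/\partial\nu - qu=0$ on $\Gamma_2$. Self-adjointness reduces, after integrating $\int_M(vLu - uLv)\,dM$ by parts twice, to the boundary integral $\int_{\Gamma_2}(v\,\partial u/\partial\nu - u\,\partial v/\partial\nu)\,ds$; the Robin condition replaces each normal derivative by $q$ times the boundary trace, producing cancellation. Compactness of the resolvent follows by choosing $c>0$ large enough that the bilinear form associated to $cI-L$ is coercive on the space of $H^1$ functions vanishing on $\Gamma_1$. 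Lax-Milgram then yields a bounded inverse $L^2\to H^1_0(M)$, and composition with the Rellich embedding $H^1_0(M)\hookrightarrow L^2(M)$ gives the compact self-adjoint operator whose spectral theorem delivers an orthonormal basis $\{\psi_n\}$ of $L^2(M)$ with $L\psi_n=-\lambda_n\psi_n$, matching the basis in Lemma \ref{le-eigen}.

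With the spectral decomposition in hand, expanding $u=\sum u_n\psi_n$ and $f=\sum f_n\psi_n$ turns the equation $\lambda u - Lu = f$ into the countable system
\[
(\lambda+\lambda_n)\,u_n = f_n,\qquad n\ge 1.
\]
In the first case of the lemma, $\lambda$ is not an eigenvalue, so (with the sign convention of \eqref{eq-eigen}) each $\lambda+\lambda_n$ is nonzero and, because $\lambda_n\to+\infty$, bounded away from $0$ by some $\delta>0$. Consequently $u_n=f_n/(\lambda+\lambda_n)$ defines the unique $L^2$-solution, satisfying $\|u\|_{L^2}\le\delta^{-1}\|f\|_{L^2}$, and elliptic regularity applied to $-\Delta u = \lambda u - |\sigma|^2 u - f$ together with the boundary conditions lifts $u$ to $H^1_0(M)$. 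In the second case, some $\lambda+\lambda_{n_0}$ vanishes; the system then forces $f_{n_0}=0$ for every index $n_0$ with $\lambda_{n_0}$ realizing that eigenvalue, i.e.\ $f$ must be $L^2$-orthogonal to $E_\lambda$. Conversely, under this compatibility condition, setting $u_n=f_n/(\lambda+\lambda_n)$ for indices outside $E_\lambda$ and letting the remaining components be arbitrary produces the full affine family of solutions.

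The subtlest step I expect is verifying the coercivity of $cI-L$ used in the Lax-Milgram argument: the Robin contribution $-\int_{\Gamma_2}q u^2\,ds$ to the quadratic form is not obviously of lower order, but the trace inequality $\|u\|_{L^2(\Gamma_2)}^2\le \varepsilon\|\nabla u\|_{L^2(M)}^2 + C_\varepsilon\|u\|_{L^2(M)}^2$ lets one absorb it into the gradient term for small $\varepsilon$ and into $c\|u\|_{L^2}^2$ for $c$ sufficiently large, after which the remaining potential term $|\sigma|^2 u^2$ is dominated as well. Once coercivity is secured, the rest is routine spectral theory for self-adjoint operators with compact resolvent, and no geometry of the particular surface $\phi$ enters beyond the existence of the trace and Rellich embeddings on $M$.
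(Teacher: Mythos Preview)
The paper does not actually prove this lemma; it is quoted from Koiso \cite{ko} as a known result, so there is no in-paper argument to compare against. Your approach---show that $L$ with the mixed Dirichlet/Robin boundary conditions is self-adjoint with compact resolvent, invoke the spectral theorem, and read off the Fredholm alternative coordinate by coordinate---is the standard one and is precisely what underlies the cited result. Your handling of the Robin boundary contribution via the trace inequality to secure coercivity of $cI-L$ for large $c$ is the correct device here.

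One point deserves more care. With the convention of \eqref{eq-eigen} one has $-L\psi_n=\lambda_n\psi_n$, so your diagonal system $(\lambda+\lambda_n)u_n=f_n$ is correct, but then the non-degeneracy condition is that $-\lambda$ not lie in the spectrum, not $\lambda$. Your parenthetical ``with the sign convention of \eqref{eq-eigen}'' does not bridge this gap: as the lemma is literally stated, ``$\lambda$ is not an eigenvalue'' does \emph{not} imply $\lambda+\lambda_n\neq 0$. This is really a sign slip in the paper's formulation rather than in your mathematics; the lemma is only invoked later at $\lambda=0$, where the distinction vanishes, so nothing downstream is affected. Still, in a self-contained write-up you should either flag the typo or state and prove the version with the consistent sign.
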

The uniqueness problem of solutions of \eqref{eq-f} is related with the Implicit Function Theorem and the solutions of the Jacobi equation $\Delta u+|\sigma|^2u=0$.  If  $D_uF(0,H_0):C_0^{2,\alpha}(M)\rightarrow C^\alpha(M)$ is bijective, there exists $\delta>0$ and a unique map $\varphi:(H_0-\delta,H_0+\delta)\rightarrow C_0^{2,\alpha}(M)$ such that $\varphi(H_0)=0$ and $F(\varphi(H),H)=0$ for any $|H-H_0|<\delta$. In such case, the immersion defined by $\phi+\varphi(H)N$ has constant mean curvature $H$.

On the other hand, assume that $\lambda=0$ is not an eigenvalue of the problem \eqref{eq-eigen}, that is, the only solutions of the Jacobi equation are trivial. This means that $D_uF(0,H_0)$ is one-to-one. Indeed,   $D_u F(0,H_0)$ is injective: if $v\in C_0^2(M)$ satisfies
$D_uF(0,H_0)(v)=0$, that is, $Lv=0$,   the solution is unique by using Lemma \ref{le-ko}. Then necessarily $v=0$. On the other hand, $D_uF(0,H_0)$ is surjective because given $f\in  L^2(M)$,  equation  $D_uF(0,H)(v)=f$ has a   solution
by   Lemma \ref{le-ko} again. Thus, $D_uF(0,H)$ is one-to-one and the Implicit Function Theorem yields the result.

In the case that $\lambda=0$ is an eigenvalue of \eqref{eq-eigen}, we can apply the Implicit Function Theorem in the next particular case (\cite[Lemma 3.3]{ko}):

\begin{lemma}\label{le-43} Let $\phi:M\rightarrow\r^3$ be an immersion with constant mean curvature $H_0$. Assume that $\lambda=0$ is an eigenvalue of \eqref{eq-eigen} with $E_0=\mbox{span}\{u_0\}$ and $\int_M u_0\ dM\not=0$. Then there exits an open $V$ around $0$ and a unique injective map $\psi:V\rightarrow C^{2,\alpha}_0(M)$, $\psi(H_0)=0$, such that for any $u\in V$, $\phi+(u+\psi(H))N$ has constant mean curvature $H$ with the same boundary as $\phi$. Moreover, there exists no other immersion on $M$ of constant mean curvature with the same boundary than $\phi$. In particular, this happens if $\lambda_1=0$.
\end{lemma}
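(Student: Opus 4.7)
I would apply a Lyapunov--Schmidt reduction to the equation $F(u,H)=0$ at the degenerate base point $(0,H_0)$, using the non-degeneracy $\int_M u_0\,dM\neq 0$ to close the argument in the one-dimensional kernel direction. Since $L$ is self-adjoint and Fredholm of index zero, one has $L^2$-orthogonal splittings
\begin{equation*}
C^{2,\alpha}_0(M)=E_0\oplus X,\qquad C^\alpha(M)=E_0\oplus Y,
\end{equation*}
where $Y=\{f\in C^\alpha(M):\int_M fu_0\,dM=0\}$ is the range of $L$ (by Lemma~\ref{le-ko} applied to $\lambda=0$). Writing $u=tu_0+v$ with $t\in\r$ and $v\in X$, and letting $P$ denote the projection onto $Y$, the restricted map $D_v(PF)(0,0,H_0)=-L\bigl|_X:X\to Y$ is a Banach space isomorphism.

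The implicit function theorem then produces a unique smooth map $v=v(t,H)\in X$, defined for $(t,H)$ in a neighborhood of $(0,H_0)$, with $v(0,H_0)=0$ and $PF(tu_0+v(t,H),H)=0$. What remains is the scalar bifurcation equation
\begin{equation*}
\Phi(t,H):=\int_M F\bigl(tu_0+v(t,H),H\bigr)\,u_0\,dM=0.
\end{equation*}
Clearly $\Phi(0,H_0)=0$, and a brief computation using the self-adjointness of $L$ together with $Lu_0=0$ yields
\begin{equation*}
\partial_H\Phi(0,H_0)=\int_M D_HF(0,H_0)\,u_0\,dM=2\int_M u_0\,dM\neq 0
\end{equation*}
by the hypothesis. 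Consequently the scalar implicit function theorem produces a unique function $H=H(t)$, with $H(0)=H_0$, solving $\Phi(t,H(t))=0$ for $t$ in a small neighborhood $V$ of $0$. Setting $\psi(t):=v(t,H(t))$ then yields the asserted one-parameter family of constant mean curvature immersions $\phi+(tu_0+\psi(t))N$ with the same boundary as $\phi$. Because the reduction captures every local zero of $F$ near $(0,H_0)$, this curve also delivers the uniqueness statement: any CMC immersion on $M$ sufficiently close to $\phi$ with the same boundary lies on this family.

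For the ``in particular'' assertion, when $\lambda_1=0$ is the first eigenvalue of \eqref{eq-eigen}, the Courant nodal domain theorem forces the first eigenfunction $u_0$ to have constant sign on $M$, so $\int_M u_0\,dM\neq 0$ is automatic and the main hypothesis of the lemma is in force.

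The step I expect to be most delicate is the transversality computation: one must keep track of the implicit dependence $v=v(t,H)$ inside $\Phi$ and verify that the contribution from $D_uF\cdot\partial_Hv$ drops out, using the self-adjointness of $L$ together with $Lu_0=0$. Once this identification is made, the hypothesis $\int_M u_0\,dM\neq 0$ is precisely the condition that makes the scalar bifurcation equation nondegenerate in $H$, and the implicit function theorem closes the argument cleanly.
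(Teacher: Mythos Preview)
The paper does not supply its own proof of this lemma; it is quoted directly from Koiso \cite[Lemma~3.3]{ko} and used as a black box. So there is no in-paper argument to compare against.

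Your Lyapunov--Schmidt reduction is correct and is the standard route to such a result. The key transversality step is handled properly: the contribution $\int_M D_uF(0,H_0)(\partial_H v)\,u_0\,dM=-\int_M L(\partial_H v)\,u_0\,dM$ vanishes by self-adjointness of $L$ and $Lu_0=0$, leaving $\partial_H\Phi(0,H_0)=2\int_M u_0\,dM\neq 0$. The ``in particular'' clause is also justified correctly via the sign property of the first eigenfunction.

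One minor point: you parametrize the resulting curve by $t$ (the kernel coordinate), whereas the lemma as stated in the paper appears to parametrize by $H$. Since $\partial_t\Phi(0,H_0)=0$ (the same self-adjointness computation shows this), one gets $H'(0)=0$, so the map $t\mapsto H(t)$ is not locally invertible and the curve cannot in general be reparametrized by $H$ near $H_0$. This is not a flaw in your argument but rather reflects an imprecision in the lemma's statement as transcribed in the paper; the essential content---existence and local uniqueness of a one-parameter family of CMC immersions through $\phi$---is exactly what your reduction delivers.
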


By Proposition \ref{pr-stable}, we know that for a cylindrical channel $C(r,\gamma)$ with $0<\gamma\leq \pi/2$,  the first eigenvalue of \eqref{eq-eigen} is non-negative. Then the Implicit Function Theorem implies that  there exists a unique deformation of $C(r,\gamma)$  by surfaces  with constant mean curvature with the same boundary $\partial C(r,\gamma)$. Indeed, the surfaces of the deformation is given by pieces of cylinders again  which are described by  $\{M_t;|t|<\epsilon\}$, where $M_t=C(r\frac{\sin\gamma}{\sin(\gamma+t)}, \gamma+t)$, $|t|<\epsilon$ for $\epsilon>0$ sufficiently small.

Therefore, and in order to find a point of bifurcation,
  we have to pay our attention in those cylinders $C(r,\gamma)$ with $\gamma>\pi/2$.
It follows from the stability analysis given in Section \ref{s3} that the cylindrical channel $C(r,\gamma)$ for a wavelength $h$, $0<h\leq h_0$,
where $h_0$ is the value defined in \eqref{h-li}, is stable because the eigenvalues are all non-negative. If $h\in (h_0,2h_0)$, the smallest eigenvalue is negative but the other $\lambda_{k,n}$ are all positive  until that we reach the value $h=2h_0$, where the second eigenvalue is zero. If we fix the boundary of all $C(r,\gamma)$ to be the straight-lines $L_1\cup L_2$ and consider the value of the radius  $r$ as a variable parameter (or equivalently, the value of the mean curvature $H$ of the cylinder $C(r,\gamma)$), then the above statement may be interpreted as saying that the cylindrical solution  loses stability as the parameter $r$ increases through the critical value $h=2h_0$, that is,
$$T=2h_0=\frac{4\pi r\gamma}{\sqrt{4\gamma^2-\pi^2}}.$$
One expects that at a point where a known curve of solutions loses stability, a new branch of solutions bifurcates from the known curve. In our case, we regard the mean curvature $H$ as a bifurcation parameter and we want to show that when  $h=2h_0$, a family of non-rotational constant mean curvature surfaces and with boundary $L_1\cup L_2$ bifurcates off the family of cylindrical channels $C(r,\gamma)$. Then we are looking for solutions of the equation $F(u,H)=0$ in a neighborhood of the solution $(u,H)=(0,H_0)$ representing a piece of a cylindrical channel with radius $r=1/(2H_0)$.

The result that we shall apply is the bifurcation from a simple eigenvalue theorem of Crandall and Rabinowith, which we recall now in our context:

\begin{theorem}[\cite{cr}]\label{t-cr} Let $F:X\times I\rightarrow Y$ be a twice continuously Fr\'echet differentiable functional, where $X$ and $Y$ are Banach spaces, $I\subset \r$ and $H_0\in I$. Suppose $F(0,H)=0$ for all $H\in I$ and
\begin{enumerate}
\item $\mbox{dim Ker}(D_u F(0,H_0))=1$. Assume that Ker$(D_u F(0,H_0))$ is spanned by $u_0$.
\item The codimension of the range of $D_u F(0,H_0) $ is $1$, i.e., $F(0,H_0)$ is a Fredholm operator of index zero.
\item $D_HD_uF(0,H_0)(u_0)\not\in\mbox{rank }D_u F(0,H_0) $.
\end{enumerate}
Then  there exists a nontrivial  continuously differentiable curve through $(0,H_0)$, namely  $(u(s),H(s))$, $s\in(-\epsilon,\epsilon)$ with $u(0)=0$, $H(0)=H_0$, such that $F(u(s),H(s))=0$ for any $|s|<\epsilon$. Moreover, $(0,H_0)$ is a bifurcation point of the equation $F(u,H)=0$ in the following sense: in a neighborhood  of $(0,H_0)$ the set of solutions of $F(u,H)=0$ consists only of the curve $(0,H)$ and the curve $(u(s),H(s))$.
\end{theorem}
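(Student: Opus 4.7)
The plan is to prove Theorem \ref{t-cr} by a Lyapunov--Schmidt reduction that turns the bifurcation equation into a scalar problem, then to apply the Implicit Function Theorem. The key observation is that the trivial family $\{(0,H)\}$ already satisfies $F(0,H)=0$, so one can factor out the scaling in the direction of the kernel and obtain a regular equation at the would-be bifurcation point. Write $L_0 = D_u F(0,H_0)$. Using hypotheses (1) and (2), fix a topological direct sum decomposition $X = \operatorname{span}\{u_0\}\oplus X_1$ and $Y = Y_1 \oplus \operatorname{Range}(L_0)$, where $\dim Y_1 = 1$; by hypothesis (3) we may in fact take $Y_1 = \operatorname{span}\{D_H D_u F(0,H_0)u_0\}$.

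Next I would introduce the rescaled unknown $u = s(u_0 + v)$ with $s\in\mathbb{R}$ and $v\in X_1$, and define
\begin{equation*}
G(s,v,H) \;=\; \int_0^1 D_u F\bigl(ts(u_0+v),\,H\bigr)(u_0+v)\,dt.
\end{equation*}
Because $F(0,H)\equiv 0$, the identity $F(s(u_0+v),H) = s\,G(s,v,H)$ holds for all $s$, and the integral representation makes $G$ of class $C^1$ (using that $F$ is twice continuously Fréchet differentiable) even across $s=0$, with $G(0,v,H) = D_u F(0,H)(u_0+v)$. Nontrivial zeros of $F$ near $(0,H_0)$ therefore correspond bijectively to zeros of $G$ near $(0,0,H_0)$, while zeros of $G$ with $s=0$ contain the trivial branch information.

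Then I would apply the Implicit Function Theorem to $G$ at the point $(0,0,H_0)$, viewing $(v,H)\in X_1\times\mathbb{R}$ as the dependent variables and $s$ as the parameter. We have $G(0,0,H_0) = L_0 u_0 = 0$, and the partial derivative
\begin{equation*}
D_{(v,H)}G(0,0,H_0)\colon X_1\times\mathbb{R}\longrightarrow Y,\qquad (w,h)\longmapsto L_0 w + h\,D_H D_u F(0,H_0)u_0.
\end{equation*}
Injectivity follows because if the sum vanishes, then $h\,D_H D_u F(0,H_0)u_0 = -L_0 w\in\operatorname{Range}(L_0)$ forces $h=0$ by the transversality hypothesis (3), after which $L_0 w = 0$ with $w\in X_1$ forces $w=0$ by the choice of $X_1$; surjectivity follows from the decomposition $Y = Y_1\oplus\operatorname{Range}(L_0)$ together with the fact that $L_0\colon X_1\to\operatorname{Range}(L_0)$ is onto. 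Hence the Implicit Function Theorem yields $C^1$ maps $s\mapsto v(s)\in X_1$ and $s\mapsto H(s)\in\mathbb{R}$ with $v(0)=0$, $H(0)=H_0$ and $G(s,v(s),H(s))=0$ for $|s|<\epsilon$. Setting $u(s) = s(u_0 + v(s))$ produces the required bifurcating curve, and $u(0)=0$ together with $u'(0)=u_0\neq 0$ guarantees that it is genuinely nontrivial and transverse to $\{(0,H)\}$.

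Finally, local uniqueness — namely that the full zero set of $F$ near $(0,H_0)$ consists only of the trivial branch and the bifurcating branch — follows from the uniqueness clause of the Implicit Function Theorem applied to $G$: any zero $(u,H)$ of $F$ near $(0,H_0)$ either has $u=0$ (trivial branch) or can be written as $u = s(u_0+v)$ with $(s,v,H)$ close to $(0,0,H_0)$ and $G(s,v,H)=0$, whence $(v,H) = (v(s),H(s))$. I expect the main technical obstacle to be verifying rigorously that $G$ inherits $C^1$-regularity at $s=0$; this requires differentiating under the integral and exploiting that $D_u F$ itself is $C^1$ in $(u,H)$, so that the parameter dependence of the integrand is smooth enough for dominated-convergence-type arguments to justify exchanging $\partial_s$ with $\int_0^1$.
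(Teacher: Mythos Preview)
Your argument is the standard Lyapunov--Schmidt reduction and is correct in outline; this is essentially how Crandall and Rabinowitz prove the result in \cite{cr}. Note, however, that the paper itself does \emph{not} prove Theorem~\ref{t-cr}: it is quoted from \cite{cr} as a black box and then applied to the specific functional $F(u,H)=2(H-H(u))$ on cylinders. So there is no ``paper's own proof'' to compare against---you have supplied a proof where the paper simply cites one.
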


Here we take $X=V\subset C^{2,\alpha}_0(M)$ and $Y=C^\alpha(M)$. Fix a radius $r>0$ (or a value of the mean curvature $H_0=1/(2r)$). Consider a cylindrical channel $C(r,\gamma)$ under the hypothesis of Theorem \ref{t1} with length $T$ given by the above value. In order to apply the Crandall-Rabinowitz scheme, we seek non trivial solutions of \eqref{eq-eigen} that are $T$-periodic in the $x$-direction for some period $T>0$. We use separation of variables as in Section \ref{s3}. Thus, given a function $u$ on $C(r,\gamma)$ we consider $u$ defined in $\r/2\pi T\mathbb{Z}\times [\frac{\pi}{2}-\gamma,\frac{\pi}{2}+\gamma]$ and we write $u$ as a Fourier expansion on the functions $\sin (2\pi n t/T)$ and $\cos(2\pi n t/T)$. As we have looking for eigenvalues of the periodic  problem \eqref{eq-eigen} in the $t$-variable, the function $\cos(2\pi nt/T)$ writes as $\sin(2\pi n t /T+\tilde{h})$ for appropriate constant $\tilde{h}$, which does not affect to our problem. Then we can write $u$ in the following way
\begin{equation}\label{2uts}
u(t,s)=\sum_{n=1}^\infty g_n(s)\sin(\frac{2\pi n}{T}t).
\end{equation}
Using the expression of the operator $L$ in cylindrical coordinates, the functions $g_n$ satisfy $g_n''(s)+c^2 g_n(s)=0$ with
$$c^2=r^2\Big(\frac{1}{r^2}-\frac{4n^2\pi^2}{T^2}+\lambda\Big).$$
The solutions of $g_n$ are,  up constants,
$$g_n(s)=\sin(\frac{k\pi(s-(\frac{\pi}{2}-\gamma))}{2\gamma}),\ \ k\in\mathbb{N}.$$
Denote for $k,n$ the eigenfunctions
$$u_{k,n}(t,s)=\sin{(\frac{k\pi(s-(\frac{\pi}{2}-\gamma))}{2\gamma})}\sin{(\frac{2\pi n}{T}t)}, \ (t,s)\in\frac{\r}{2\pi T\mathbb{Z}}\times[\frac{\pi}{2}-\gamma,\frac{\pi}{2}+\gamma]$$
whose eigenvalues are
$$\lambda_{k,n}=\frac{1}{r^2}\Big(\frac{(k^2-n^2)\pi^2+4n^2\gamma^2}{4\gamma^2}-1\Big).$$
Then $0$ is an eigenvalue for $k=n=1$, that is, $\lambda_{1,1}$.  The eigenspace $E_0$ for the zero eigenvalue is spanned by $u_{1,1}$:
\begin{equation}\label{u11}
E_0=\mbox{span}\{u_{1,1}\}=\mbox{span}\{\sin{(\frac{\pi(s-(\frac{\pi}{2}-\gamma))}{2\gamma})}\sin{(\frac{2\pi}{T}t)}\}.
\end{equation}
In particular, $\mbox{dim}(E_0)=1$. In order to have the range of $L(u_{1,1})$, we calculate Im$(L(u_{1,1}))$. Let $f\in \mbox{Im}(L(u_{1,1}))$. Then there is $v$ such that $L(v)=f$. Since $0$ is an eigenvalue of $L$, by Lemma \ref{le-ko}, item 2, the necessary and sufficient condition is that $\int_M u_{1,1}v\ dM=0$ for any $v\in \mbox{Ker}(L)$. As dim(Ker($L$))$=1$, this means that the image of $L$ is the orthogonal subspace of $u_{1,1}$, $E_0^\bot$, showing that the codimension of $\mbox{rank}D_u F(0,H_0)$ is $1$.

Finally, we have to show that $D_HD_u F(0,H_0)(u_{1,1})\not\in \mbox{Im}(D_uF(0,H_0))$. We compute the partial of $D_uF(0,H_0)$ with respect to the variable $H$. We point out that in our result on bifurcation, the mean curvature is a parameter. In our case, given a cylinder $C(r,\gamma)$, $r=1/(2H)$ and
$$D_uF(0,H)(v)=L(v)=v_{uu}+4H^2 v_{ss}+4H^2v.$$
Thus
\begin{equation}\label{duh}
D_HD_u F(0,H)(v)=8H(v_{ss}+v).
\end{equation}
Replacing into \eqref{duh}   the expression of $u_{1,1}$ given in \eqref{u11}, we obtain
\begin{eqnarray*}
D_HD_uF(0,H_0)(u_{1,1})&=&8H_0(1-\frac{\pi^2}{4\gamma^2})\Big(\sin(\frac{\pi(s-(\frac{\pi}{2}-\gamma))}{2\gamma}\sin(\frac{2\pi t}{T})\Big)\\
&=& 8H_0(1-\frac{\pi^2}{4\gamma^2})u_{1,1}.
\end{eqnarray*}
We suppose that there exists $v$ such that $L(v)=D_HD_uF(0,H)(u_{1,1})$. Then using Lemma \ref{le-ko}, we have
\begin{equation}\label{in-u11}
\int_M u_{1,1}D_HD_uF(0,H_0)(u_{1,1})\ dM=0.
\end{equation}

Thus \eqref{in-u11} writes as
$$\int_M 8H_0(1-\frac{\pi^2}{4\gamma^2})u_{1,1}^2\ dM=0,$$
which it is a contradiction because $\gamma\not=\pm\pi/2$. This shows our assertion and it ends the proof of Theorem \ref{t1}.

The surfaces obtained in Theorem \ref{t1} and close to the value $H_0$, are embedded, periodic with period $T$ and lie in one side of $P$. The fact that the mean curvature is constant and the periodicity allow to know something more about the geometry of the new surfaces obtained by the bifurcation theory.

\begin{corollary}\label{co} Let $\Omega$ be a strip in a plane $P$ and denote   $Q$ the orthogonal plane to $P$ parallel to $\partial\Omega$   that  divides $\Omega$ in two symmetric domains. Consider  an embedded surface $M$ with constant mean curvature spanning $\partial\Omega$ and  periodic in the direction of $\partial\Omega$. If $M$ lies in one side of $P$, then $M$ is symmetric with respect to $Q$.
\end{corollary}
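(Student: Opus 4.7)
The plan is to prove Corollary \ref{co} by the classical Alexandrov moving planes method, applied with reflection planes orthogonal to $P$ and parallel to the target symmetry plane $Q$.

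I would first fix coordinates so that $P=\{z=0\}$, $\Omega=\{(x,y,0):-a\leq y\leq a\}$, the two boundary lines $L_1\cup L_2=\partial\Omega$ are $\{y=\pm a\}\cap P$, and $Q=\{y=0\}$; by hypothesis $M\subset\{z\geq 0\}$ and $M$ is $T$-periodic in the $x$-direction. Passing to the quotient $\r^3/(T\z)$ turns $M$ into a compact embedded CMC surface whose boundary is (the image of) $L_1\cup L_2$; together with the image of $\Omega$ it bounds a compact region $R$, and I orient $M$ by the inward normal into $R$. The family of planes $P_c=\{y=c\}$ is invariant under $x$-translation, so it descends to the quotient and serves as the Alexandrov reflection family; denote by $\rho_c$ the reflection across $P_c$.

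Following the classical scheme, I would start with $c$ larger than $\sup_M y$, slide $c$ downward, and set
$$c^{*}=\inf\{c\geq 0:\rho_{c'}(M\cap\{y\geq c'\})\subset\overline{R}\ \text{for all}\ c'\geq c\}.$$
At $c=c^{*}$ a first contact occurs between $\rho_{c^{*}}(M\cap\{y\geq c^{*}\})$ and $M\cap\{y\leq c^{*}\}$, either at a common interior point, along the intersection curve $M\cap P_{c^{*}}$, or along a reflected boundary line lying on $P$. In each case, writing the two surfaces locally as normal graphs over a common tangent plane and applying the interior or boundary Hopf maximum principle to the uniformly elliptic CMC equation that both graphs satisfy forces them to coincide near the contact point; real-analyticity of CMC surfaces then propagates the coincidence globally, so $M$ is symmetric across $P_{c^{*}}$. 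The boundary set $\partial M=L_1\cup L_2$ is preserved by $\rho_{c^{*}}$ only when $c^{*}=0$, hence $c^{*}=0$. Running the symmetric argument starting from $c\to-\infty$ yields the same critical plane, so $M$ is invariant under $\rho_0$, which is the reflection across $Q$.

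The most delicate step I anticipate is the contact case where $P_{c^{*}}$ meets $M$ along or adjacent to a boundary component $L_i\subset P$, because there $M\cup\Omega$ has a non-smooth edge and no a priori contact angle is prescribed. I would handle this by invoking the boundary version of the Hopf lemma tailored to the CMC equation with Dirichlet data on a plane, exactly as in the Serrin and Gidas--Ni--Nirenberg refinements of Alexandrov reflection. Initialising the moving plane is routine: if $M$ exceeds the slab $|y|\leq a$ in the $y$-direction one starts with an interior tangency at $c=\sup_M y$, and otherwise one starts at $c=a$ with the reflected piece being a small cap near $L_2$.
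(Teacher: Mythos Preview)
Your proposal is correct and takes essentially the same approach as the paper: Alexandrov reflection across the planes $\{y=c\}$ parallel to $Q$, with the maximum principle for the CMC equation forcing symmetry at the first contact and the boundary constraint $\partial M=\partial\Omega$ pinning the symmetry plane at $c=0$. The only minor differences are that you compactify by passing to the quotient $\r^3/T\z$ (the paper simply uses periodicity to get boundedness in the $y$-direction) and you invoke a Serrin--Gidas--Ni--Nirenberg corner lemma for the boundary-edge contact, whereas the paper sidesteps that case by running the reflection from $-\infty$ when $t_1=0$ occurs without a tangency, producing a contact at some $t_2<0$ and a contradiction with the symmetry of $\partial\Omega$.
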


\begin{proof}
The proof uses in a standard way the Alexandrov reflection method by a uniparametric family of parallel planes $Q_t$ to $Q$ that foliate $\r^3$ (\cite{al}). For this, we take the $3$-domain $W$ bounded by $P$ and $\Omega$ which it is possible because $M$ is embedded and $M$ lies over $P$. Assume that $P$ is the plane $z=0$, $\Omega=\{(x,y)\subset\r^2;-m\leq y\leq m\}$ and $M$ is included in the halfspace $z> 0$. By the periodicity of the surface, $M$ is bounded along the $y$-direction.

Let $Q_t$ be the plane $y=t$ so that $Q_0=Q$. We introduce the next notation. If $A\subset\r^3$ is a subset of Euclidean space, let $A_{t}^{+}=A\cap \{(x,y,z)\in\r^3; y>t\}$,  $A_{t}^{-}=A\cap \{(x,y,z)\in\r^3; y<t\}$ and $A^*$ the reflection of $A$ with respect to $Q_t$. Starting from $t=+\infty$, the boundedness of  $M$ along the $y$-direction assures that the planes $Q_t$ do not touch $M$ if $t$ is sufficiently big. We decrease $t$ until the first time $t=t_0\geq m$ such that $Q_{t_0}$ touches $M$. Let us follow  $t\searrow 0$. For values $t<t_0$ and close to $t_0$, the surface $(M_t^{+})^*$ lies included in the domain $W$, that is, $(M_t^{+})^*\subset W$ . We continue with the process until that this property of inclusion fails the first time at $t=t_1$, $0\leq t_1<t_0$. In such case, we have two possibilities:
\begin{enumerate}
\item There is a common tangent point between $(M_{t_1}^{+})^*$ and $M_{t_1}^{-}$. The maximum principle of the constant mean curvature equation implies that both surfaces  agree, that is, $(M_{t_1}^{+})^* = M_{t_1}^{-}$ (\cite{gt}). Then  $Q_{t_1}$ is a plane of symmetry of $M$. In particular, $Q_{t_1}$ is a plane of  symmetry of the boundary of $M$, namely, $\partial M=\partial\Omega$, which it means that  $t_1=0$. This  proves  the result.
\item The value $t_1$ is $0$ and there is not a common tangent point between $(M_0^{+})^*$ and $M_0^{-}$. Then we start with the Alexandrov process with values $t$ close to $t=-\infty$ and consider the reflections of $M_{t}^-$ across $Q_t$, that is, $(M_t^{-})^*$. Using the fact that $t_1=0$ and that $(M_0^{+})^*$ and $M_0^{-}$ have not tangent point,  necessarily there exists $t_2<0$ such that $(M_{t_2}^{-})^*$ has a tangent point with $M_{t_2}^+$. The maximum principle would imply that the plane $Q_{t_2}$ is a plane of symmetry of $M$, which it is a contradiction because $\partial\Omega$ is not symmetric with respect to $Q_{t_2}$. This shows that this case is impossible.
\end{enumerate}

\end{proof}

As a consequence of Corollary \ref{co}, the surfaces obtained in Theorem \ref{t1} and close to the bifurcation point inherit the longitudinal symmetries of $\Omega$, that is, they are invariant by the symmetries with respect to the longitudinal plane that is orthogonal to $P$. This gives a mathematical support about the experiments and graphic models that appeared in Fig. \ref{f-li}.

\section{Stability of pieces of cylinders in right wedges}\label{s5}

Consider a wedge $W$ of angle $\pi/2$ and denote $P_1$ and $P_2$ the two half-planes that define $W$ with $L:=P_1\cap P_2$ the axis of the wedge. We study the stability of a cylinder bounded by two parallel straight-lines $L_1\cup L_2$, one of them, namely $L_1$, is prescribed in $P_1$ and parallel to $L$ and the other one, $L_2$, moves on $P_2$. Denote $\gamma\in (0,\pi)$ the angle that makes the cylinder with the plane $P_2$ along $L_2$ and  $C(r,\gamma)$ the corresponding cylinder. We assume that $L$ is the $x$-axis,  $P_1$ is the plane $z=0$,  $P_2$ is the plane $y=0$ and $W$ is the quadrant $y,z>0$. We parametrize the cylinder  $C(r,\gamma)$ by $\phi(t,s)=(t,r\cos(s),r\sin(s))$ with $s\in [0,\beta]$, $\beta\in(0,3\pi/2)$. As in Section \ref{s3}, it is enough to focus for truncated cylinders of length $h>0$. Let us take a cylinder of length $h$ by letting $0\leq x\leq h$. The eigenvalue problem corresponding to the quadratic form $I$ is given by \eqref{eq-eigen} where now $\Gamma_1=L_1$ and $\Gamma_2=L_2$. We use separation of variables again and
define the function $u=u(t,s)$ by
\begin{equation}\label{uts2}
u(t,s)=\sum_{n=1}^\infty g_n(s)\sin(\frac{n\pi}{h}t),
\end{equation}
where $s\in [0,\beta]$, $0\leq t\leq h$. The boundary conditions are
$$u(t,0)=0, \ \ \ \frac{\partial u}{\partial \nu}(t,\beta)-qu(t,\beta)=0.$$
Here
$$\nu(t,\beta)=\frac{1}{r}\phi_s(t,\beta),\ \ \frac{\partial u}{\partial\nu}=\frac{1}{r}u_s,\ \ q=\pm\frac{1}{r}\cot\gamma,$$
where $+$ (resp. $-$) occurs if the cylinder is convex (resp. concave).   Then $g_n$ satisfies \eqref{eq-g} and the boundary conditions are now:
\begin{equation}\label{boundary2}
g_n(0)=0,\ g_n'(\beta)\pm\cot\gamma g_n(\beta)=0,
\end{equation}
with $-$ (resp. $+$) if the cylinder is convex (resp. concave). In order to study the stability problem of the cylinder, we distinguish both cases.

\begin{proposition} Under the above conditions,  a concave cylinder is   stable.
\end{proposition}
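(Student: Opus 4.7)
The plan is to reuse the separation-of-variables analysis of Section \ref{s3}. First, I would insert the Fourier expansion \eqref{uts2} into $Lu+\lambda u=0$; since $\Delta=\partial_{tt}+r^{-2}\partial_{ss}$ and $|\sigma|^{2}=1/r^{2}$, each mode $g_{n}$ must satisfy \eqref{eq-g} on $[0,\beta]$ together with the concave sign of \eqref{boundary2}. Denoting by $\mu_{1}\leq\mu_{2}\leq\cdots$ the eigenvalues of the one-dimensional Sturm--Liouville problem
\begin{equation*}
g''+\mu g=0,\qquad g(0)=0,\qquad g'(\beta)+\cot\gamma\,g(\beta)=0,
\end{equation*}
the eigenvalues of \eqref{eq-eigen} on the truncated piece of length $h$ then decompose as
\begin{equation*}
\lambda_{k,n}=\frac{\mu_{k}-1}{r^{2}}+\frac{n^{2}\pi^{2}}{h^{2}},\qquad k,n\geq 1.
\end{equation*}

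By Lemma \ref{le-eigen}, stability of the infinite cylinder amounts to stability of every truncated piece $0\leq x\leq h$, i.e., to $\lambda_{1,n}\geq 0$ for all $n$ and all $h>0$. Letting $h\to\infty$ this reduces to the single inequality $\mu_{1}\geq 1$; conversely this inequality makes every $\lambda_{k,n}$ non-negative. Thus the whole claim collapses onto the one-dimensional eigenvalue inequality $\mu_{1}\geq 1$.

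To handle it I would analyze the characteristic equation directly. For $\mu=c^{2}>0$ the admissible solution is $g(s)=\sin(cs)$, and the Robin condition at $s=\beta$ becomes $\Phi(c):=c\cot(c\beta)+\cot\gamma=0$. A short computation using the inequality $\sin(2x)/2<x$ on $(0,\pi)$ shows that $\Phi$ is strictly decreasing on $(0,\pi/\beta)$ from $\Phi(0^{+})=1/\beta+\cot\gamma$ down to $-\infty$, with
\begin{equation*}
\Phi(1)=\cot\beta+\cot\gamma=\frac{\sin(\beta+\gamma)}{\sin\beta\,\sin\gamma}.
\end{equation*}
Hence the smallest positive root satisfies $c_{1}\geq 1$ precisely when $\sin(\beta+\gamma)\geq 0$, i.e.\ $\beta+\gamma\leq\pi$. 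A parallel analysis with $\sinh(cs)$ in place of $\sin(cs)$ shows that no negative eigenvalue occurs so long as $\cot\gamma\geq -1/\beta$.

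The main obstacle is not analytic but geometric: one must verify that the very definition of a \emph{concave} cylinder in the wedge $W$, encoded in the sign convention $q=-\cot\gamma/r$ adopted in Section \ref{s5}, automatically enforces both $\beta+\gamma\leq\pi$ and $\cot\gamma\geq -1/\beta$ on the angular span $\beta$ and the contact angle $\gamma$. This is a trigonometric bookkeeping in the cross section of $W$: one locates the axis of $C(r,\gamma)$ relative to $L_{1}\subset P_{1}$ and $L_{2}\subset P_{2}$ and translates the concavity hypothesis into these two sign conditions. Once this geometric reduction is in hand, $\mu_{1}\geq 1$ is immediate and Lemma \ref{le-eigen} concludes the stability of $C(r,\gamma)$.
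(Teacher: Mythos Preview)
Your approach is essentially the same as the paper's: separation of variables reducing to the one-dimensional problem \eqref{eq-g} with the concave sign of \eqref{boundary2}, followed by a case analysis on the sign of $C$. The difference is one of organisation. You extract the abstract sufficient conditions $\beta+\gamma\leq\pi$ and $\cot\gamma\geq -1/\beta$ and then defer the geometric verification; the paper instead records at the outset the precise constraints a concave cylinder in the wedge must satisfy, namely $0\leq\gamma<\pi/2$ and $\beta<\pi/2-\gamma$, and plugs these in directly. With those two inequalities in hand the three cases collapse immediately: for $C\leq 0$ the boundary relations $c\tan\gamma+\tanh(c\beta)=0$ and $1+\beta\cot\gamma=0$ are impossible because every term is positive, and for $C>0$ the relation $c\tan\gamma+\tan(c\beta)=0$ forces $c\beta\in(\pi/2,\pi)$, whence $c>\pi/(2\beta)>1$ since $\beta<\pi/2$. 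So the ``trigonometric bookkeeping'' you flag as the main obstacle is in fact a single line: concavity with the axis outside $W$ pins down $\gamma<\pi/2$ and $\beta<\pi/2-\gamma$, which are strictly stronger than the conditions you derived and make your monotonicity analysis of $\Phi$ unnecessary. Both routes are correct; the paper's is shorter because it uses the sharp geometric constraint rather than the minimal analytic one.
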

\begin{proof}
Because the cylinder lies in the wedge, the contact angle $\gamma$ satisfies $0\leq\gamma<\pi/2$ and $\beta<\pi/2-\gamma$. See Fig. \ref{fig4}. We solve \eqref{eq-g}  letting $C= r^2(\frac{1}{r^2}-\frac{n^2\pi^2}{h^2}+\lambda)$ again.
\begin{enumerate}
\item Case $C<0$. Put $c=\sqrt{-C}$. The solution is $g_n(s)=Ae^{cs}+Be^{-cs}$ and the conditions \eqref{boundary2} are equivalent to
$$A+B=0,\ \  Ac(e^{c\beta}+e^{-c\beta})+\cot\gamma B (e^{c\beta}-e^{-c\beta})=0.$$
This says that $B=-A$ and the second equation writes as $c\tan\gamma+\tanh(c\beta)=0$, which is a contradiction because $\tan\gamma\geq 0$ and $c$ and $c\beta$ are positive numbers.
\item Case $C=0$. Then $g_n(s)=As+B$, for some numbers  $A$ and $B$. As $g_n(0)=0$, then $B=0$ and the second equation in \eqref{boundary2} means $A(1+\beta\cot\gamma)=0$, which it is a contradiction again.
\item Case $C>0$. Now $g_n(s)=A\cos(sc)+B\sin(sc)$, where $A,B\in\r$. Since $g_n(0)=0$, then $A=0$. Then other equation in \eqref{boundary2} is $c\cos(c\beta)+\sin(c\beta)\cot\gamma=0$ or equivalently, $c\tan\gamma+\tan(c\beta)=0$. If we see this equation on $c$, $c>0$, this implies that
$c\beta\in(\pi/2,\pi)$ that is, $c>\pi/(2\beta)$. As $\beta<\pi/2$, we have  $c>1$. From the expression of $C$, we have
$$1<c^2=1-\frac{n^2\pi^2 r^2}{h^2}+\lambda r^2,$$
which implies that $\lambda$ is always positive for any value of $h$. In particular,   the cylinder is stable by Lemma \ref{le-eigen}.
\end{enumerate}
\end{proof}
\begin{figure}[hbtp]
\begin{center}
  \includegraphics[width=.8\textwidth]{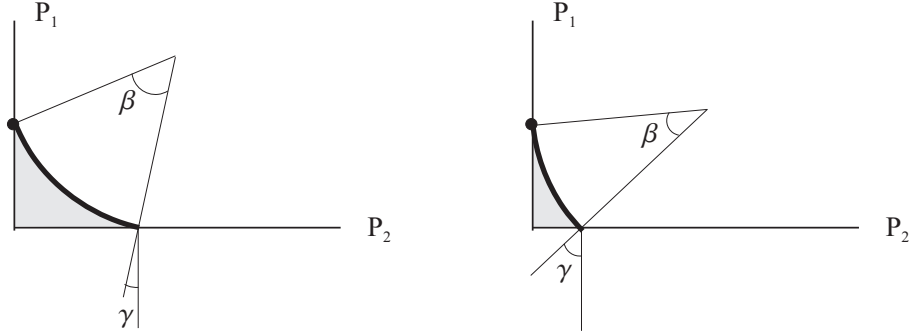}
\end{center}
\caption{Concave  cylinders in a right wedge where the contact angle satisfies $0<\gamma<\pi/2$.}\label{fig4}
\end{figure}

We study a convex cylinder in the  particular case that the contact angle is $\gamma=\pi/2$. See Fig. \ref{fig5}.

\begin{proposition}\label{pr-pi2}   Assume $\gamma=\pi/2$ and let the convex cylinder that makes a contact angle $\gamma=\pi/2$ with $P_2$. If $\beta\leq\pi/2$, then  is stable and if $\beta>\pi/2$, then it is not stable.
\end{proposition}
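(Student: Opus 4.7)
The plan is to run the separation of variables scheme already used for the concave case, exploiting the key simplification that at $\gamma=\pi/2$ one has $\cot\gamma=0$, so the capillary boundary condition in \eqref{boundary2} degenerates into a pure Neumann condition at $s=\beta$. Writing a test function as in \eqref{uts2}, every coefficient $g_n(s)$ must solve \eqref{eq-g} together with $g_n(0)=0$ and $g_n'(\beta)=0$. I would then classify the solutions by the sign of $C=r^2\bigl(\tfrac{1}{r^2}-\tfrac{n^2\pi^2}{h^2}+\lambda\bigr)$ exactly as in the previous proposition.

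For $C<0$, Dirichlet at $0$ forces $g_n(s)=A\sinh(cs)$, and then $g_n'(\beta)=Ac\cosh(c\beta)=0$ gives only the trivial solution. The case $C=0$ is excluded in the same way. For $C>0$, the Dirichlet condition gives $g_n(s)=B\sin(cs)$, and $g_n'(\beta)=Bc\cos(c\beta)=0$ forces $c\beta=(2k+1)\pi/2$ for some integer $k\ge 0$. Solving for $\lambda$ yields the explicit spectrum
\begin{equation*}
\lambda_{k,n}=\frac{1}{r^2}\left(\frac{(2k+1)^2\pi^2}{4\beta^2}-1\right)+\frac{n^2\pi^2}{h^2},\qquad k\ge 0,\ n\ge 1,
\end{equation*}
whose minimum is $\lambda_{0,1}=\tfrac{1}{r^2}\bigl(\tfrac{\pi^2}{4\beta^2}-1\bigr)+\tfrac{\pi^2}{h^2}$.

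Conclusion is then immediate from Lemma \ref{le-eigen}. If $\beta\le\pi/2$, the first parenthesis is nonnegative, so $\lambda_{0,1}>0$ for every truncation length $h$, and consequently every compact subdomain of the cylinder is stable. If $\beta>\pi/2$, the first term is strictly negative, so for $h$ large enough $\lambda_{0,1}<0$; this produces a compact unstable subdomain and hence instability of the cylinder in the sense adopted at the beginning of Section \ref{s3}.

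There is no real obstacle: the delicate part of the general capillary case (a transcendental equation $c\tan\gamma+\tan(c\beta)=0$) collapses to the trivial eigencondition $\cos(c\beta)=0$ when $\gamma=\pi/2$. The only point to watch is the sign bookkeeping in \eqref{boundary2} (convex gives the minus sign, so the coefficient of $g_n(\beta)$ is $-\cot\gamma$, which vanishes anyway), and the fact that stability/instability of the noncompact cylinder is read off from the family of truncations $0\le x\le h$, which justifies taking $h$ as large as necessary in the unstable case.
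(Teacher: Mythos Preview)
Your argument is correct and follows the same separation-of-variables approach as the paper's own proof: reduce to $g_n(0)=0$, $g_n'(\beta)=0$, rule out $C\le 0$, and read off $c\beta=\pi/2+k\pi$ in the case $C>0$. One small point to tighten: Lemma~\ref{le-eigen} gives instability only from $\lambda_2<0$, not from $\lambda_1<0$ alone, so concluding instability from just $\lambda_{0,1}<0$ is not quite enough; your explicit spectrum formula shows at once that for $h$ sufficiently large both $\lambda_{0,1}$ and $\lambda_{0,2}$ are negative, which is precisely the ``many negative eigenvalues'' observation the paper uses to invoke the lemma.
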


\begin{proof}
The boundary conditions \eqref{boundary2} write now as $g_n(0)=0$ and $g_n'(\beta)=0$. We distinguish  three cases again.
\begin{enumerate}
\item If $C<0$, $g_n(s)=Ae^{cs}+Be^{-cs}$, where $A,B\in\r$. The boundary conditions imply $g_n=0$ for any $n$: contradiction.
\item If $C=0$, $g_n(s)=As+B$, $A,B\in\r$. The boundary conditions give $g_n=0$, which it is impossible again.
\item If $C>0$, $g_n(s)=A\cos(sc)+B\sin(sc)$, $A,B\in\r$. As $g_n(0)=0$, $A=0$. From the second equation, $\cos(c\beta)=0$, that is, $c\beta=\pi/2+k\pi$, $k\in\mathbb{N}\cup\{0\}$. Then $c^2\geq\pi^2/(4\beta^2)$. If $\beta\leq\pi/2$, we have from the expression of the constant $C$ that
    $$\lambda=\frac{n^2\pi^2}{h^2}+\frac{c^2-1}{r^2}\geq \frac{n^2\pi^2}{h^2}+\frac{\pi^2/(4\beta^2)-1}{r^2}>0,$$
    showing that the surface is stable. If $\pi/2<\beta<\pi$, then $\pi/(4\beta^2)<1$. If we take $k=0$, the number $c^2-1$  in the expression of $\lambda$ in terms of $c^2$ is negative.   Assuming $h$ sufficiently big, we obtain many negative eigenvalues, which shows that the surface is not stable by Lemma \ref{le-eigen}.
\end{enumerate}

\end{proof}
\begin{figure}[hbtp]
\begin{center}
  \includegraphics[width=.8\textwidth]{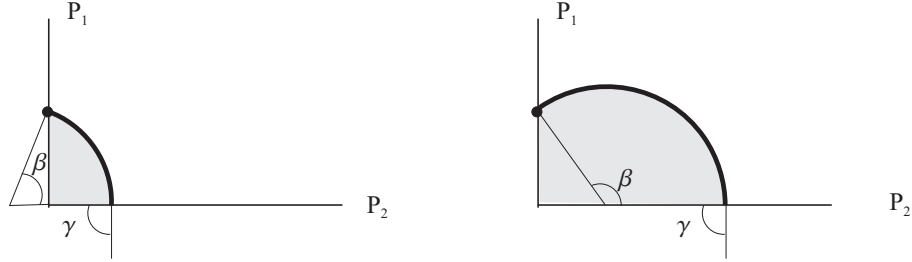}
\end{center}
\caption{Convex cylinders in a right wedge: case $\gamma=\pi/2$.}\label{fig5}
\end{figure}

\begin{proposition}\label{pr-52} Under the setting of this Section, a convex cylinder of length $h>0$ and $\gamma\not=\pi/2$ is stable if and only if the following conditions hold:
\begin{enumerate}
\item $\gamma<\pi/2$ and $e^{2c\beta}\not=(1+c\tan\gamma)/(1-c\tan\gamma)$.
\item $\gamma<\pi/2$ and $\beta\not=\tan\gamma$.
\item $\gamma<\pi/2$, $c\beta<\pi/2$ and $c\tan\gamma-\tan(c\beta)=0$ has  no root for $c\in(0,1)$.
\item $\gamma>\pi/2$, $c\beta>\pi/2$ and $c\tan\gamma-\tan(c\beta)=0$ has  no root for $c\in(0,1)$.
\end{enumerate}
\end{proposition}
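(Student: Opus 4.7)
The strategy is to follow exactly the separation-of-variables argument used in Proposition~\ref{pr-stable} and in the two preceding propositions of Section~\ref{s5}: write any $u(t,s)$ via the Fourier expansion~\eqref{uts2}, reduce the eigenvalue problem~\eqref{eq-eigen} to the ODE $g_n'' + C g_n = 0$ with $C = r^2(1/r^2 - n^2\pi^2/h^2 + \lambda)$, and impose the boundary conditions $g_n(0) = 0$, $g_n'(\beta) - \cot\gamma\, g_n(\beta) = 0$, where the minus sign is the convex choice in~\eqref{boundary2}. From $\lambda = (C-1)/r^2 + n^2\pi^2/h^2$ one reads off that an eigenvalue can be made negative (by taking $n = 1$ and $h$ large) precisely when a non-trivial $g_n$ arises in one of the three windows $C < 0$, $C = 0$, or $C \in (0,1)$. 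By Lemma~\ref{le-eigen}, stability is therefore equivalent to the non-existence of non-trivial solutions in all three windows, and items (1)--(4) will be nothing more than the negations of the characteristic equations coming from those windows.

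I treat the three windows in turn. For $C < 0$, set $c = \sqrt{-C}$; the Dirichlet condition forces $g_n(s) = A\sinh(cs)$, and the Robin condition collapses to $c\tan\gamma = \tanh(c\beta)$, which after exponentiating is exactly $e^{2c\beta} = (1+c\tan\gamma)/(1-c\tan\gamma)$. Since the left hand side always exceeds $1$ while the right hand side lies in $(0,1)$ or is non-positive whenever $\tan\gamma \le 0$, no positive root exists when $\gamma > \pi/2$, so item~(1) is vacuous in that regime. For $C = 0$, the Dirichlet condition forces $g_n(s) = As$, and the Robin condition reduces to $\beta = \tan\gamma$, which again is incompatible with $\gamma > \pi/2$.

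For $C > 0$ write $c = \sqrt{C}$, so that $g_n(s) = A\sin(cs)$ and the Robin condition is $c\tan\gamma = \tan(c\beta)$. The associated eigenvalue $\lambda = (c^2-1)/r^2 + n^2\pi^2/h^2$ is destabilizing (at $n = 1$ with $h$ large) exactly when $c \in (0,1)$. Matching the signs of $\tan\gamma$ and $\tan(c\beta)$ places $c\beta$ in the principal branch $(0, \pi/2)$ when $\gamma < \pi/2$, yielding item~(3), and in the branch $(\pi/2, \pi)$ when $\gamma > \pi/2$, yielding item~(4); the bound $c\beta < 3\pi/2$ implied by $c < 1$ and $\beta < 3\pi/2$ keeps us safely inside those first two branches.

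The computations themselves are routine variants of those in Section~\ref{s3} and in the two preceding propositions. The only subtlety, and the main point where some care is needed, is the sign-of-$\tan\gamma$ bookkeeping that forces the analysis to split across $\gamma < \pi/2$ versus $\gamma > \pi/2$ and selects the correct branch of $\tan(c\beta)$ in the third case; everything else is linear ODE and algebra, closed off by a single application of Lemma~\ref{le-eigen}.
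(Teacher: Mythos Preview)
Your proposal is correct and follows essentially the same approach as the paper: separation of variables via~\eqref{uts2}, reduction to $g_n'' + Cg_n = 0$ with the convex boundary conditions~\eqref{boundary2}, and the same three-case analysis on the sign of $C$ leading to the characteristic equations in items~(1)--(4). Your use of $\sinh(cs)$ in place of $Ae^{cs}-Ae^{-cs}$ and your explicit remark that $c<1$, $\beta<3\pi/2$ confines $c\beta$ to the first two branches of $\tan$ are minor cosmetic improvements, but the argument is otherwise identical to the paper's.
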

\begin{proof}
As in the above proposition, we solve the eigenvalue problem \eqref{eq-eigen}. We point out that in the case that the cylinder is convex, $\gamma$ can take any value in the interval $(0,\pi)$: see Fig. \ref{fig6}.  We use \eqref{uts2} and the   boundary conditions \eqref{boundary2} with the choice of the sign $-$ in the second equation. We analyze all the possibilities according to the sign of the constant $C$.
\begin{enumerate}
\item Case $C<0$. If $c=\sqrt{-C}$, the solution is $g_n(s)=Ae^{cs}+Be^{-cs}$. From $g_n(0)=0$, we deduce $B=-A$, and the second equation of \eqref{boundary2} writes now as $c\tan\gamma-\tanh(c\beta)=0$.
This equation has no solutions if $\gamma>\pi/2$. If $\gamma<\pi/2$,  it is possible the existence of such $c$,  exactly,
$$e^{2c\beta}=\frac{1+c\tan\gamma}{1-c\tan\gamma}.$$
For this value of $c$, the eigenvalues are
$$\lambda=\frac{n^2\pi^2}{h^2}-\frac{c^2+1}{r^2}.$$
If $h$ is sufficiently big, there are many $n$'s such that the corresponding eigenvalue $\lambda$ is negative. This means that the surface is not stable by Lemma \ref{le-eigen}.
\item Case $C=0$. Then $g_n(s)=As+B$ with $B=0$ and $A(1-\cot\gamma \beta)=0$. If $\gamma>\pi/2$, this is not possible. If $\gamma<\pi/2$, then $\beta=\tan\gamma$. In such case, the eigenvalues $\lambda$ are
    $$\lambda=\frac{n^2\pi^2}{h^2}-\frac{1}{r^2}.$$
    Again, if $h$ is sufficiently big, there are many integers $n$ so the corresponding eigenvalue is negative, which shows that the surface is not stable.
\item Case $C>0$. Now $g_n(s)=A\cos(sc)+B\sin(sc)$. Since $g_n(0)=0$, then $A=0$. Then the second equation of \eqref{boundary2} is
$c\cos(c\beta)-\cot\gamma\sin(c\beta)=0$, that is, $c\tan\gamma-\tan(c\beta)=0$. This equation is not solvable if $\gamma<\pi/2$ and $c\beta\geq \pi/2$ or $\gamma>\pi/2$ and $c\beta\leq\pi/2$. In the other cases,
$$\lambda=\frac{n^2\pi^2}{h^2}+\frac{c^2-1}{r^2}$$
and if $c\tan\gamma-\tan(c\beta)=0$ has roots on $c\in (0,1)$, then for $h$ sufficiently big, there are many eigenvalues $\lambda$ that are negative and so, the surface is not stable.
\end{enumerate}

\end{proof}

\begin{figure}[hbtp]
\begin{center}
  \includegraphics[width=.8\textwidth]{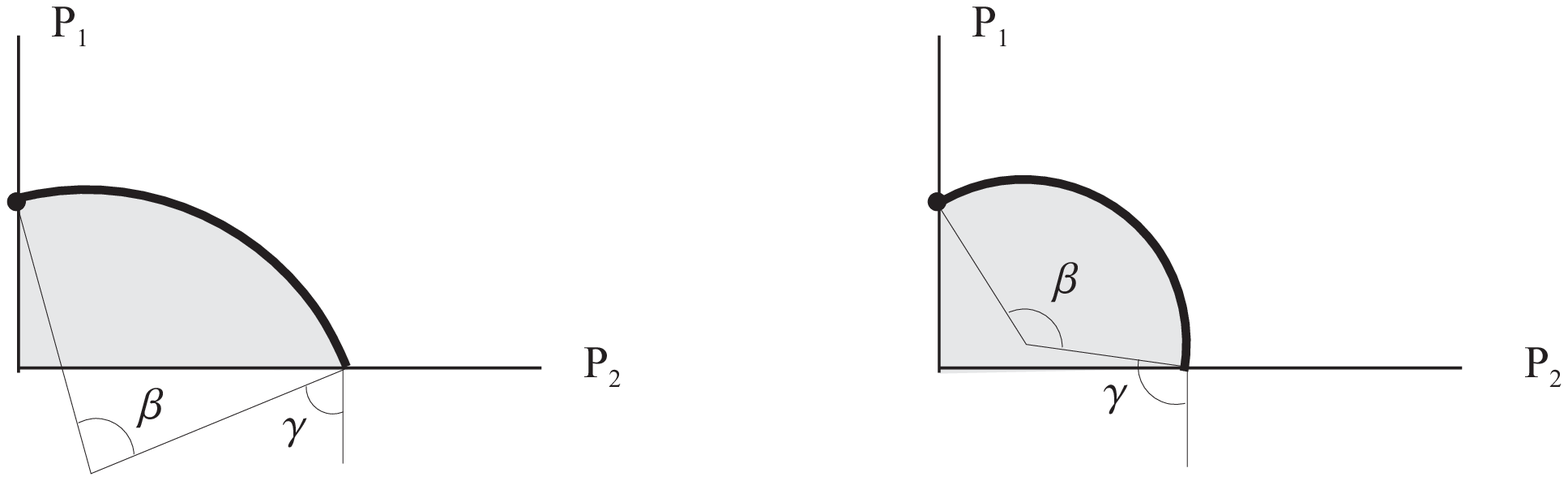}
\end{center}
\caption{Convex  cylinders in a right wedge. On the left, the angle $\gamma$ satisfies $0<\gamma<\pi/2$ and on the right we have $\pi/2<\gamma<\pi$.}\label{fig6}
\end{figure}

\section{Proof of Theorem \ref{t2}}\label{s6}

Let $W$ be a right wedge determined by to orthogonal planes $P_1\cup P_2$. Let $M$ be a surface with non-empty boundary and assume that $\partial M$ has two components, $\partial M=\Gamma_1\cup\Gamma_2$. Let $\phi:M\rightarrow \r^3$ be an immersion whose image lies in the wedge $W$  such that $\phi_{|\Gamma_1}$ is a prescribed curve in the plane $P_1$ and the other one satisfies $\phi_{|\Gamma_2}\subset P_2$.  We consider stationary surfaces of the corresponding variational problem, which leads to that the mean curvature $H$ of the immersion is constant and the angle that makes the surface with the plane $P_2$ is a constant $\gamma$ along the curve $\Gamma_2$. Consider normal admissible variations of $\phi$ given by $\phi+uN$, where $u$ is a smooth function on $M$ that vanishes on $\Gamma_1$. If $V$ in an open of $0\in C^{2,\alpha}_0(M)$, we define $F:V\times\r\rightarrow C^\alpha(M)\times\r$ by
$$F(u,H)=(2(H-H_u),\gamma_u-\gamma),$$
where  $H_u$ is the mean curvature of the immersion and  $\gamma_u$ is the angle that makes the surface $\phi+uN$ with the plane $P_2$: see \cite{vo}. The analogous result of Lemma \ref{le-ko}, item 2, for the eigenvalue $\lambda=0$, is now:

\begin{lemma}[Lemma 3.9 in \cite{vo}]\label{le-h0h1}
The functional $F$ is Fr\'echet differentiable and
$$D_uF(0,H)(v)=(Lv,{\cal B}v), $$
where   $Lv=\Delta v+|\sigma|^2v$ and the operator ${\cal B}$ is
$${\cal B}v=\frac{\partial v}{\partial\nu}-qv\ \ \mbox{on $\Gamma_2$}.$$
A pair of differentiable functions $(\varphi_1,\varphi_2)\in C_0^\infty(M)\times C^\infty(\Gamma_2)$  lies in the image of $D_uF(0,H)$ if and only if for any $u_0\in E_0$,
$$\int_M u_0 \varphi_1\ dM-\int_{\Gamma_2} u_0 \varphi_2\ ds=0.$$
\end{lemma}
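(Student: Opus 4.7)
The statement has two independent parts: Fréchet differentiability of $F$ with the explicit derivative formula, and the characterization of the image of $D_uF(0,H)$. I would treat them in that order, then address the orthogonality equivalence through a Fredholm-alternative argument.

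\textbf{Fréchet differentiability and the form of $D_uF(0,H)$.} For $u\in V$ small, the perturbed immersion $\phi_u=\phi+uN$ is a $C^{2,\alpha}$-close immersion; its mean curvature $H_u$ depends smoothly on $u$ in the $C^\alpha(M)$ topology from the parametric mean-curvature expression, and the contact angle $\gamma_u$ along the trace $\phi_u(\Gamma_2)\subset P_2$ likewise depends smoothly on $u$ in $C^\alpha(\Gamma_2)$ (for small $u$ the trace stays on $P_2$ by an implicit-function argument using transversality $\gamma\in(0,\pi)$). Once this regularity is in place, the derivative is read off from the standard first-variation formulas under the normal variation $\phi+tvN$: the derivative of $2H_{tv}$ at $t=0$ is $-Lv$ pointwise, so the first component of $D_uF(0,H)(v)$ is $Lv$; for the second component one differentiates $\cos\gamma_u=\langle N_u,\tilde N\rangle$ along the image of $\Gamma_2$, tracking both the rotation of $N_u$ and the sliding of the trace on $P_2$, which yields exactly the boundary expression ${\cal B}v=\partial v/\partial\nu-qv$ already appearing in the second-variation formula recalled in Section \ref{s2}. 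The detailed computation is carried out in a more general setting in \cite{vo}.

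\textbf{Necessity of the orthogonality condition.} Suppose $(\varphi_1,\varphi_2)=(Lv,{\cal B}v)$ for some $v\in C^{2,\alpha}_0(M)$. For any $u_0\in E_0$ we have $Lu_0=0$ in $M$, $u_0=0$ on $\Gamma_1$, and ${\cal B}u_0=0$ on $\Gamma_2$. Green's identity combined with $Lu_0=0$ gives
$$\int_M u_0\, Lv\, dM=\int_{\partial M}\Bigl(u_0\frac{\partial v}{\partial\nu}-v\frac{\partial u_0}{\partial\nu}\Bigr)ds.$$
The $\Gamma_1$-contribution vanishes because both $u_0$ and $v$ are zero there. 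On $\Gamma_2$, the condition ${\cal B}u_0=0$ reads $\partial u_0/\partial\nu=qu_0$, so
$$\int_M u_0\, Lv\, dM=\int_{\Gamma_2}u_0\Bigl(\frac{\partial v}{\partial\nu}-qv\Bigr)ds=\int_{\Gamma_2}u_0\,{\cal B}v\, ds,$$
which rearranges to the claimed identity $\int_M u_0\varphi_1\,dM-\int_{\Gamma_2}u_0\varphi_2\,ds=0$.

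\textbf{Sufficiency (Fredholm alternative).} Given $(\varphi_1,\varphi_2)$ orthogonal to $E_0$ in the stated sense, one must solve the mixed boundary value problem $Lv=\varphi_1$ in $M$, $v=0$ on $\Gamma_1$, ${\cal B}v=\varphi_2$ on $\Gamma_2$. I would set this up weakly on the closed subspace $\{v\in H^1(M):v|_{\Gamma_1}=0\}$, where the bilinear form associated with $L$ and ${\cal B}$ coincides, up to a sign, with the index form $I$ from Section \ref{s2}. The operator determined by this mixed Dirichlet/Robin problem is self-adjoint in $L^2(M)$ with compact resolvent, so it is Fredholm of index zero with kernel equal to $E_0$ (elliptic regularity promotes $H^1$-kernel elements into $C^{2,\alpha}$). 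The standard Fredholm alternative therefore guarantees solvability precisely when the data $(\varphi_1,\varphi_2)$ annihilate $E_0$ under the duality pairing produced by integration by parts, which by Step 2 is exactly $(u_0,(\varphi_1,\varphi_2))\mapsto\int_M u_0\varphi_1\,dM-\int_{\Gamma_2}u_0\varphi_2\,ds$. Schauder estimates for mixed boundary value problems then upgrade the weak solution to $v\in C^{2,\alpha}_0(M)$.

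The main obstacle is the last step: one must rigorously set up Fredholm theory for the non-standard mixed Dirichlet/Robin problem for $L$ in function spaces compatible with $F$, and verify that the natural duality pairing produced by Green's identity is exactly the one stated in the lemma, including the crucial minus sign in front of the boundary integral, which is forced by the sign appearing in Step 2.
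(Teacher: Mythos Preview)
Your argument is correct and is the standard route to this result. Note, however, that the paper does not supply its own proof of this lemma: it is simply quoted as Lemma~3.9 of Vogel~\cite{vo}, with no argument given. So there is nothing in the paper to compare your proof against beyond the attribution.

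Substantively, your three-step outline (first-variation identification of $D_uF(0,H)$, Green's identity for necessity, Fredholm alternative for sufficiency) is exactly how such a lemma is established; the Green's identity computation in your second step is clean and produces the correct sign on the boundary term. Your closing caveat about the mixed Dirichlet/Robin Fredholm setup is appropriate: that is indeed where the analytic work lies, and it is precisely what Vogel's paper carries out in detail.
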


Consider the particular case that the contact angle $\gamma$ with the plane $P_2$ is exactly $\gamma=\pi/2$. By Proposition \ref{pr-pi2} we know that if $\beta\leq\pi/2$ the surface is stable and the bifurcation can not take place. Therefore we study the case  $\beta>\pi/2$.

\begin{theorem}\label{t-pi2} For a convex cylinder and in the case $\gamma=\pi/2$,  $\beta>\pi/2$, if we denote
$$T=\frac{4\pi r\beta}{\sqrt{4\beta^2-\pi^2}},$$
then the convex cylinder $C(r,\pi/2)$ bounded by $L_1$ and making an contact angle $\gamma$ with $P_2$ bifurcates in periodic surfaces with period $T$.
\end{theorem}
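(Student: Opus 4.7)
The plan is to apply the Crandall-Rabinowitz theorem (Theorem \ref{t-cr}) to the functional $F(u,H)=(2(H-H_u),\gamma_u-\gamma)$ of Section \ref{s6}, mirroring the proof of Theorem \ref{t1} but now with $F$ also carrying a capillary angle component along $\Gamma_2=L_2$. The crucial simplification provided by $\gamma=\pi/2$ is that $q=2H\cot\gamma=0$, so by Lemma \ref{le-h0h1} the linearization $D_uF(0,H_0)(v)=(Lv,\mathcal{B}v)$ enforces the Dirichlet condition $v=0$ on $\Gamma_1=L_1$ together with the pure Neumann condition $\partial_s v(t,\beta)=0$ on $\Gamma_2$.

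First I would parametrize $T$-periodic normal variations of $C(r,\pi/2)$ as $v(t,s)=\sum_{n\ge 1}g_n(s)\sin(2\pi n t/T)$ on the fundamental rectangle $[0,T]\times[0,\beta]$, following the phase-fixing convention already used in Section \ref{s4}. Reducing the kernel equation $Lv=0$ to an ODE problem in each Fourier mode gives $g_n''+c^2 g_n=0$ with $g_n(0)=g_n'(\beta)=0$, where $c^2=1-4r^2n^2\pi^2/T^2+r^2\lambda$. The non-trivial solutions $g_n(s)=\sin(c_k s)$ force $c_k=(2k-1)\pi/(2\beta)$, producing the eigenvalue list
\begin{equation*}
\lambda_{k,n}=\frac{(2k-1)^2\pi^2-4\beta^2}{4r^2\beta^2}+\frac{4n^2\pi^2}{T^2},\qquad k,n\ge 1.
\end{equation*}
Solving $\lambda_{1,1}=0$ in $T$ recovers exactly the stated value $T=4\pi r\beta/\sqrt{4\beta^2-\pi^2}$, and for this $T$ the monotonicity of $\lambda_{k,n}$ in both indices confirms $(k,n)=(1,1)$ is the unique zero. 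Hence the kernel of $D_uF(0,H_0)$ is one-dimensional and spanned by
\begin{equation*}
u_{1,1}(t,s)=\sin\!\Big(\frac{\pi s}{2\beta}\Big)\sin\!\Big(\frac{2\pi t}{T}\Big).
\end{equation*}

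For the Fredholm-of-index-zero step, Lemma \ref{le-h0h1} says a pair $(\varphi_1,\varphi_2)$ lies in $\mathrm{Im}(D_uF(0,H_0))$ if and only if the single linear functional $\int_M u_{1,1}\varphi_1\,dM-\int_{\Gamma_2}u_{1,1}\varphi_2\,ds$ vanishes, so the range has codimension exactly one. For the transversality condition, differentiating $L=\partial_{tt}+4H^2\partial_{ss}+4H^2$ and the boundary operator $\mathcal{B}v=2Hv_s|_{s=\beta}$ in $H$ yields
\begin{equation*}
D_HD_uF(0,H_0)(u_{1,1})=\Bigl(8H_0\bigl(1-\tfrac{\pi^2}{4\beta^2}\bigr)u_{1,1},\ 2\,\partial_s u_{1,1}|_{s=\beta}\Bigr).
\end{equation*}
The boundary piece vanishes because $\partial_s u_{1,1}|_{s=\beta}\propto\cos(\pi/2)=0$, so the Lemma \ref{le-h0h1} integral test reduces to $8H_0\bigl(1-\pi^2/(4\beta^2)\bigr)\int_M u_{1,1}^2\,dM$, which is non-zero precisely because $\beta\neq\pi/2$. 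Thus $D_HD_uF(0,H_0)(u_{1,1})\notin\mathrm{Im}(D_uF(0,H_0))$, and Theorem \ref{t-cr} produces the bifurcating branch.

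The step I expect to be most delicate is confirming genuine simplicity of the zero eigenvalue at the chosen period: one must check that no other $(k,n)$ gives $\lambda_{k,n}=0$ at this $T$, and more subtly that the phase-fixing restriction to the sine series (forced by the $t$-translation invariance of the problem) is compatible with the Crandall-Rabinowitz setup. Beyond that, the argument is a direct transcription of the proof of Theorem \ref{t1} into the mixed Dirichlet/Neumann setting of Section \ref{s5}, with the pleasant coincidences $q\equiv 0$ and $\cos(\pi/2)=0$ sweeping away every would-be boundary obstruction.
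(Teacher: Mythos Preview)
Your proposal is correct and follows essentially the same route as the paper: separation of variables on the $T$-periodic rectangle, identification of the one-dimensional kernel spanned by $\sin(\pi s/(2\beta))\sin(2\pi t/T)$, the codimension-one range via Lemma~\ref{le-h0h1}, and transversality from $8H_0(1-\pi^2/(4\beta^2))\int_M u_{1,1}^2\,dM\neq 0$. The only cosmetic differences are your index shift ($c_k=(2k-1)\pi/(2\beta)$ with $k\geq 1$ versus the paper's $c\beta=\pi/2+k\pi$ with $k\geq 0$) and your more explicit handling of the $D_H\mathcal{B}$ boundary term, which the paper absorbs into the observation $\mathcal{B}u_{0,1}=0$.
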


\begin{proof} We know by Proposition \ref{pr-pi2} that the eigenvalues of \eqref{eq-eigen} occur when $C>0$. In such case the eigenvalues are
\begin{equation}\label{2lambda}
\lambda_{k,n}=\frac{n^2\pi^2}{h^2}+\frac{c^2-1}{r^2}
\end{equation}
with $c\beta=\pi/2+k\pi$, $k\in\mathbb{N}\cup\{0\}$, $n\in\mathbb{Z}$.  If $h=T/2$, the first eigenvalue is  $0$. If $h$ goes from $T/2$ to $T$, the first eigenvalue is negative, but the other ones $\lambda_{k,n}$ are all positive.  It is just   for $h=T$ when the second eigenvalue is $0$.  We show that at this moment there exists a bifurcation point.

We do a similar reasoning as in the proof of Theorem \ref{t1}. Because we look for $T$-periodic surfaces, we take separation of variables with a function $u$ as in \eqref{2uts}. From Proposition \ref{pr-pi2}, the function $g_n$ satisfies $g_n''(s)+c^2 g_n(s)=0$ where now $c^2$ is
$$c^2=r^2\Big(\frac{1}{r^2}-\frac{4n^2\pi^2}{h^2}+\lambda\Big).$$
From \eqref{2lambda} and the value of $T$ given in the statement of the theorem, we have
$$\lambda_{k,n}=\frac{n^2(4\beta^2-\pi^2)+4\beta^2(c^2-1)}{4r^2\beta^2}.$$
Then $\lambda=0$ is an eigenvalue for $k=0$ and $n=1$ ($c=\pi/(2\beta)$).   The corresponding eigenfunction is
$u_{0,1}(t,s)=\sin(cs)\sin(2\pi t/T)$.  We study if the hypothesis  of the bifurcation theorem of Crandall and  Rabinowitz are fulfilled.  We know that $E_0=\mbox{span}\{u_{0,1}\}$ and as a consequence $\mbox{dim}(E_0)=1$. On the other hand, we take
$(\varphi_1,\varphi_2)\in C^{\infty}_{0}(\r/2\pi T\mathbb{Z}\times [0,\beta])\times C^{\infty}_T(\Gamma_2)$ in order to compute the codimension of $\mbox{Im}(D_uF(0,H))$. We know by Lemma \ref{le-h0h1} that $(\varphi_1,\varphi_2)\in \mbox{Im}(D_uF(0,H))$ if and only if $\int_M u_{0,1} \varphi_1\ dM-\int_{\Gamma_2} u_{0,1} \varphi_2\ ds=0$. But
${\cal B}u_{0,1}=0$. Then $(\varphi_1,\varphi_2)\in \mbox{Im}(D_uF(0,H))$ if and only if
$\int_M u_{0,1}\varphi_1\ dM=0$, that is, if it belongs to the orthogonal subspace of $u_{0,1}$.
This shows that  the codimension is $1$. Finally we check that $D_HD_uF(0,H)(u_{0,1})\not\in\mbox{Im}(D_uF(0,H))$. From the definition of $F$ in Lemma \ref{le-h0h1} and \eqref{duh} we have
$$D_HD_uF(0,H)(u_{0,1})=(8H((u_{0,1})_{ss}+u_{0,1}),{\cal B}u_{0,1})=(8H((u_{0,1})_{ss}+u_{0,1}),0).$$
If $(8H((u_{0,1})_{ss}+u_{0,1}),0)\in \mbox{Im}(D_uF(0,H))$, then we would have
\begin{equation}\label{wou}
\int_M 8H((u_{0,1})_{ss}+u_{0,1})u_{0,1}dM=0.
\end{equation}
However
$$\int_M 8H((u_{0,1})_{ss}+u_{0,1})u_{0,1}\ dM=\int_0^\beta\int_0^{T} 8H(1-c^2)\sin^2(cs)\sin^2(\frac{2\pi t}{T})\ dsdt,$$
which it is not zero because $c^2-1\not=0$. This contradicts \eqref{wou}.
\end{proof}

\begin{remark} By the symmetry of solutions given in   Corollary \ref{co}, Theorem \ref{t-pi2} can see as a particular case of Theorem \ref{t1}. In this case, the value $\beta$ corresponds with the angle $\gamma$ in Theorem \ref{t1}, obtaining the same value of period $T$.
\end{remark}

We study the case $\gamma\not=\pi/2$. We know that   a concave cylinder is  stable. Then we pay our attention on a convex cylinder. The study is similar as in the proof of Theorem \ref{t1}. Given a convex cylinder $C(r,\gamma)$, for small wavelengths $h$ the surface is stable. As we increase the value of $h$, we arrive the first value $h_0$ such  that $\lambda_1=0$. We continue increasing $h$. Then the first eigenvalue  is negative, but the next ones are positive until that we arrive to a new value of $h$, namely, $h=T$, such that the second eigenvalue of \eqref{eq-eigen} is $0$. For this value of length for $C(r,\gamma)$ we shall prove that we are under the hypothesis of Theorem of Crandall-Rabinowitz, showing the existence of a bifurcation point. As our solutions will be $T$-periodic, we study the periodic eigenvalue problem \eqref{eq-eigen}. For this, we write $u=u(t,s)$ as in \eqref{2uts}. The functions $g_n$ satisfy $g_n''(s)+C g_n(s)=0$ where
$$C=r^2\Big(\frac{1}{r^2}-\frac{4n^2\pi^2}{T^2}+\lambda\Big)$$
and the boundary conditions are
\begin{equation}\label{boundary4}
g_n(0)=0,\ \ \ g_n'(\beta)-\cot\gamma g_n(\beta)=0.
\end{equation}

\begin{theorem} Assume $\gamma<\pi/2$ and $\beta>\tan\gamma$. Then the   convex cylinder $C(r,\gamma)$ bifurcates.
\end{theorem}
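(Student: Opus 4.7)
The plan is to mimic the Crandall–Rabinowitz argument used in the proofs of Theorem \ref{t1} and Theorem \ref{t-pi2}, applied to the functional $F(u,H)=(2(H-H_u),\gamma_u-\gamma)$ of Section \ref{s6} and the linearization described in Lemma \ref{le-h0h1}. The first step is to identify the critical period $T$: under the hypotheses $\gamma<\pi/2$ and $\beta>\tan\gamma$, I would revisit the case $C<0$ of Proposition \ref{pr-52} and show that the characteristic equation $c\tan\gamma=\tanh(c\beta)$ has a unique positive root $c_0$. This follows because $f(c):=\tanh(c\beta)-c\tan\gamma$ satisfies $f(0)=0$, $f'(0)=\beta-\tan\gamma>0$, is strictly concave on $c>0$ (its second derivative is negative), and tends to $-\infty$ as $c\to\infty$. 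The natural critical period is then
$$T=\frac{2\pi r}{\sqrt{1+c_{0}^{2}}},$$
obtained by setting $\lambda=0$ with $n=1$ in the expression $C=1-4n^{2}\pi^{2}r^{2}/T^{2}+\lambda r^{2}=-c_{0}^{2}$.

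Next, I would do separation of variables as in \eqref{2uts} on $\mathbb{R}/2\pi T\mathbb{Z}\times[0,\beta]$, keeping only the boundary conditions \eqref{boundary4}. The candidate zero-eigenfunction is
$$u_{0}(t,s)=\sinh(c_{0}s)\sin\!\left(\tfrac{2\pi t}{T}\right),$$
which vanishes at $s=0$ and satisfies $\mathcal{B}u_{0}=0$ at $s=\beta$ precisely because of the relation $c_{0}\tan\gamma=\tanh(c_{0}\beta)$. Verifying the three Crandall–Rabinowitz hypotheses is now routine: (i) $\dim\ker D_{u}F(0,H_{0})=1$ follows from the uniqueness of $c_0$ (ruling out another $C<0$ solution) together with the fact that $C=0$ requires $\beta=\tan\gamma$ (excluded) and $C>0$ would give the incompatible relation $c\tan\gamma=\tan(c\beta)$ at values $c\in(0,1)$ that do not match the fixed $T$; (ii) the image of $D_{u}F(0,H_{0})$ has codimension $1$ by direct application of Lemma \ref{le-h0h1}, namely it is the $L^{2}$-orthogonal to $u_{0}$.

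For the transversality condition (iii), I would argue as in the proof of Theorem \ref{t-pi2}. Since $u_{0}$ satisfies the boundary condition $\mathcal{B}u_{0}=0$ and $\mathcal{B}v$ is linear in $H$ through the factor $1/r=2H$, one checks that $D_{H}\mathcal{B}(u_{0})=0$. Using that $(u_{0})_{ss}=c_{0}^{2}u_{0}$ by construction,
$$D_{H}D_{u}F(0,H_{0})(u_{0})=\bigl(8H(c_{0}^{2}+1)u_{0},\,0\bigr).$$
By Lemma \ref{le-h0h1}, this pair belongs to $\mathrm{Im}(D_{u}F(0,H_{0}))$ iff $\int_{M}u_{0}\cdot 8H(c_{0}^{2}+1)u_{0}\,dM=0$, which is impossible because $c_{0}^{2}+1>0$ and $u_{0}\not\equiv 0$. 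Crandall–Rabinowitz then furnishes the desired bifurcating branch of $T$-periodic constant mean curvature surfaces.

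The main obstacle I anticipate is item (i): verifying that the kernel is exactly one-dimensional requires ruling out any other Fourier mode $n\neq 1$ (or a $C>0$ solution) giving $\lambda=0$ at the same value of $T$. This amounts to checking monotonicity of the branches of solutions of $c\tan\gamma=\tanh(c\beta)$ and $c\tan\gamma=\tan(c\beta)$ as functions of the mode index, together with the explicit formula for $T$; the remaining pieces of the argument are essentially parallel to those already carried out in the proof of Theorem \ref{t-pi2}.
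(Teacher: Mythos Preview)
Your proposal is correct and follows essentially the same route as the paper: rule out $C\geq 0$, locate the unique positive root $c_0$ of $c\tan\gamma=\tanh(c\beta)$ (the paper writes this in the equivalent exponential form $e^{2c\beta}=(1+c\tan\gamma)/(1-c\tan\gamma)$), set $T=2\pi r/\sqrt{1+c_0^{2}}$, take the eigenfunction $\sinh(c_0 s)\sin(2\pi t/T)$, and verify the three Crandall--Rabinowitz hypotheses via Lemma~\ref{le-h0h1} together with $(u_0)_{ss}=c_0^{2}u_0$. Your concavity argument for the uniqueness of $c_0$ and your explicit check that $D_H\mathcal{B}(u_0)=0$ are in fact slightly more careful than the paper's treatment, but the overall strategy is identical.
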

\begin{proof}
Doing a similar computations as in Proposition \ref{pr-52}, we solve $g_n$ depending  on the sign on $C$. If $C=0$, $g_n(s)=As$ with $A(1-\beta\cot\gamma)=0$, which it is a contradiction. If $C>0$,
then $g_n(s)=B\sin(cs)$, with $c\tan\gamma-\tan{(c\beta)}=0$. We see this equation on $c$, where at $c=0$, is zero and it is strictly decreasing by the fact that $\gamma<\pi/2$ and $\beta>\tan\gamma$. Thus the only possibility is that $C<0$. In such case, there is an  eigenvalue $\lambda$ if
$$e^{2c\beta}=\frac{1+c\tan\gamma}{1-c\tan\gamma}.$$
 As  $\beta>\tan\gamma$, there is a unique solution $c$. This is because $\frac{1+c\tan\gamma}{1-c\tan\gamma}$ is an  increasing function on $c$, that goes from $1$ to $\infty $ in the interval $(0,\tan\gamma)$ and from
    $-\infty$ to $-1$ in the interval $(\cot\gamma,\infty)$. On the other hand, the value $e^{2c\beta}$ is increasing on $c$, going from $1$ to $\infty$.  In such case, $0$ is an eigenvalue for
    $$T=\frac{2\pi r}{\sqrt{1+c^2}}.$$
    The corresponding eigenfunction is
    $$u_1(t,s)=g_1(s)\sin(\frac{2\pi t}{T})=(e^{cs}-e^{-cs})\sin(\frac{2\pi t}{T}).$$
 In particular, $\mbox{dim}(E_0)=1$. As in     Theorem \ref{t-pi2}, $(\varphi_1,\varphi_2)\in \mbox{Im}(D_u F(0,H))$ if    $\varphi_1$ is orthogonal to $u_1$, which shows that the codimension of Im$(D_uF(0,H))$ is $1$. As ${\cal B}u_1=0$,
    $D_HD_u F(0,H)(u_1)=(8H(u_1)_{ss}+u_1,0)$ but
    $$\int_M u_1 ( D_H D_uF(0,H)(u_1))\ dM=\int_M 8H(1+c^2)u_1^2\ dM\not=0.$$
    This means that $D_H D_uF(0,H)(u_1)\not\in\mbox{Im}(D_uF(0,H))$. Then Theorem \ref{t-cr} shows that a bifurcation does exist.
\end{proof}

\begin{theorem} Assume $\gamma<\pi/2$ and $\beta=\tan\gamma$. Then the corresponding convex cylinder $C(r,\gamma)$ bifurcates.
\end{theorem}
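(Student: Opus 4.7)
The plan is to mimic the preceding two bifurcation arguments, adapted to the critical case $\beta=\tan\gamma$ where the $C=0$ branch of the separation-of-variables analysis from Proposition \ref{pr-52} becomes non-trivial. First I would identify the bifurcation mode. In the separation of variables \eqref{2uts}, each $g_n$ solves $g_n''(s)+C g_n(s)=0$ with $C=r^2(1/r^2-4n^2\pi^2/T^2+\lambda)$ and the convex-cylinder boundary conditions \eqref{boundary4}. The constraint $\beta=\tan\gamma$ is precisely the degeneracy that makes $g_n(s)=s$ a non-trivial solution of the case $C=0$; then forcing $\lambda=0$ at the lowest periodic mode $n=1$ yields the candidate period
\begin{equation*}
T=2\pi r.
\end{equation*}
The corresponding zero-eigenfunction is
\begin{equation*}
u_1(t,s)=s\,\sin\!\Big(\frac{2\pi t}{T}\Big),\qquad (t,s)\in\frac{\r}{2\pi T\z}\times[0,\beta],
\end{equation*}
which vanishes on $\Gamma_1=\{s=0\}$ and, because $\beta=\tan\gamma$, satisfies the Robin boundary condition $\mathcal{B}u_1=\tfrac{1}{r}(1-\beta\cot\gamma)\sin(2\pi t/T)=0$ along $\Gamma_2=\{s=\beta\}$.

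Next I would verify the three Crandall--Rabinowitz hypotheses from Theorem \ref{t-cr}. The computation of the remaining modes in Proposition \ref{pr-52} shows there are no other independent zero-modes at this value of $T$, so $E_0=\mbox{span}\{u_1\}$ and $\dim(E_0)=1$. For the codimension, I invoke Lemma \ref{le-h0h1}: a pair $(\varphi_1,\varphi_2)$ lies in $\mbox{Im}(D_uF(0,H_0))$ if and only if $\int_M u_1\varphi_1\,dM-\int_{\Gamma_2}u_1\varphi_2\,ds=0$, so the range has codimension exactly $1$ in $C^\alpha(M)\times C^\alpha(\Gamma_2)$.

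The heart of the proof is the transversality condition $D_HD_uF(0,H_0)(u_1)\notin\mbox{Im}(D_uF(0,H_0))$. Using the formula \eqref{duh} together with the derivative of the capillary operator (as in the proof of Theorem \ref{t-pi2}), and using $(u_1)_{ss}=0$ with $\mathcal{B}u_1=0$, I expect to obtain
\begin{equation*}
D_HD_uF(0,H_0)(u_1)=\bigl(8H_0((u_1)_{ss}+u_1),\,0\bigr)=\bigl(8H_0\,u_1,\,0\bigr).
\end{equation*}
Testing against $u_1$ via Lemma \ref{le-h0h1} gives
\begin{equation*}
\int_M u_1\cdot 8H_0\,u_1\,dM-\int_{\Gamma_2}u_1\cdot 0\,ds=8H_0\int_M u_1^2\,dM>0,
\end{equation*}
which is strictly positive, so the pair is not orthogonal to $E_0$ and therefore not in the image. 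Theorem \ref{t-cr} then produces a non-trivial continuously differentiable curve $(u(s),H(s))$ of solutions of $F(u,H)=0$ bifurcating from $(0,H_0)$.

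The one delicate point I expect to have to check carefully is the second component of $D_HD_uF$. Because the coefficient $q=2H\cot\gamma$ depends linearly on $H$ and the normal derivative along $\Gamma_2$ scales with $H$ too, the $H$-derivative of $\mathcal{B}u_1$ is proportional to $u_1$ evaluated at $s=\beta$ minus a multiple of $(u_1)_s$ at $s=\beta$, and the exact cancellation that forces this to vanish is again the equality $\beta=\tan\gamma$. Once this cancellation is recorded, the verification of all three hypotheses is routine and the conclusion follows.
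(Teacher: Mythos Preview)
Your proposal is correct and follows essentially the same route as the paper: identify the $C=0$ mode $u_1(t,s)=s\sin(2\pi t/T)$ with $T=2\pi r$, verify $\dim E_0=1$ and codimension $1$ via Lemma~\ref{le-h0h1}, and check transversality by computing $D_HD_uF(0,H_0)(u_1)=(8H_0u_1,0)$ and noting $\int_M 8H_0u_1^2\,dM\neq 0$. The paper is slightly terser---it simply records $\mathcal{B}u_1=0$ and does not dwell on the $H$-dependence of the boundary operator as you do in your final paragraph---but the substance is identical.
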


\begin{proof} We repeat the above arguments. The only possibility to solve  the equation $g_n''(s)+C g_n(s)=0$ with boundary conditions
\eqref{boundary4} is that $C=0$. In such case, the solution is $g_n(s)=As$ and the eigenvalues are
$$\lambda_n=\frac{4n^2\pi^2}{h^2}-\frac{1}{r^2}.$$
Let $T=2\pi r$. Then $\lambda_n=0$ is an eigenvalue of the periodic eigenvalue problem \eqref{eq-eigen} if $n$ takes the value $n=1$. The corresponding eigenspace is
$E_0=\mbox{span}\{u_1\}$, where  $u_1(t,s)=s\sin(2\pi t/T)$. In particular, $\mbox{dim}(E_0)=1$.  Now ${\cal B}u_{1}=0$ and
$$D_HD_uF(0,H)(u_{1})=8H((u_{1})_{ss}+u_{1}),0)=(8Hu_1,0).$$
Using Lemma \ref{le-h0h1}, we have that $(8Hu_1,0)\not\in \mbox{Im}(D_uF(0,H))$ because
$$\int_M 8Hu_1^2\ dM\not=0.$$
\end{proof}

\begin{theorem} Assume that $c\tan\gamma-\tan(c\beta)=0$ has  a  root for $c\in(0,1)$ and either one of the next hypothesis: i) $\gamma<\pi/2$, $c\beta<\pi/2$ or ii) $\gamma>\pi/2$, $c\beta>\pi/2$. Then the convex cylinder $C(r,\gamma)$ bifurcates.
\end{theorem}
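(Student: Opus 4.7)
The plan is to apply the same Crandall--Rabinowitz scheme used in the previous two theorems of this section, with the functional $F(u,H)=(2(H-H_u),\gamma_u-\gamma)$ from Lemma \ref{le-h0h1}, and with the eigenfunction now arising from the case $C>0$ of Proposition \ref{pr-52} rather than the cases $C=0$ or $C<0$. First, I locate the critical period. Expanding $u(t,s)=\sum_n g_n(s)\sin(2\pi n t/T)$ in the $T$-periodic eigenvalue problem \eqref{eq-eigen} reduces it to $g_n''+C g_n=0$ under the boundary conditions \eqref{boundary4}, where $C=r^2(1/r^2-4n^2\pi^2/T^2+\lambda)$. The hypothesis that $c\tan\gamma-\tan(c\beta)=0$ has a root $c\in(0,1)$ produces, in the case $C=c^2>0$, the non-trivial solution $g_1(s)=\sin(cs)$; imposing $\lambda=0$ and $n=1$ then fixes the critical period
$$T=\frac{2\pi r}{\sqrt{1-c^2}},$$
and the associated eigenfunction is $u_1(t,s)=\sin(cs)\sin(2\pi t/T)$. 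Accordingly $E_0=\mbox{span}\{u_1\}$ and $\dim E_0=1$.

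Next, I verify the remaining hypotheses of Theorem \ref{t-cr}. The root equation rewrites as $c\cos(c\beta)-\cot\gamma\sin(c\beta)=0$, and a direct evaluation at $s=\beta$ gives $(u_1)_s(t,\beta)-\cot\gamma\,u_1(t,\beta)=0$, hence ${\cal B}u_1=0$. By Lemma \ref{le-h0h1}, a pair $(\varphi_1,\varphi_2)$ lies in $\mbox{Im}(D_uF(0,H))$ iff $\int_M u_1\varphi_1\,dM-\int_{\Gamma_2}u_1\varphi_2\,ds=0$, a single non-trivial linear condition, so the codimension equals $1$. For the transversality condition I differentiate both $L=\partial_{tt}+4H^2\partial_{ss}+4H^2$ and ${\cal B}v=2H(v_s-\cot\gamma\, v)$ with respect to $H$ (using $r=1/(2H)$); the same identity $c\cos(c\beta)=\cot\gamma\sin(c\beta)$ used above forces $D_H{\cal B}u_1=0$, so
$$D_HD_uF(0,H)(u_1)=\bigl(8H((u_1)_{ss}+u_1),\,0\bigr)=\bigl(8H(1-c^2)u_1,\,0\bigr).$$
If this element were in $\mbox{Im}(D_uF(0,H))$, Lemma \ref{le-h0h1} would force $8H(1-c^2)\int_M u_1^2\,dM=0$, contradicting $c\in(0,1)$. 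Theorem \ref{t-cr} then yields the bifurcation.

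The main obstacle is the transversality step: unlike the situation in Theorem \ref{t-pi2}, the fact that $\gamma\neq\pi/2$ makes $D_H{\cal B}$ contain a non-trivial $-\cot\gamma$ contribution, and without vanishing of $D_H{\cal B}u_1$ on $\Gamma_2$ the compatibility criterion of Lemma \ref{le-h0h1} would pick up an extra boundary integral. What rescues the argument is that the defining equation of the eigenvalue is precisely the identity needed to kill $D_H{\cal B}u_1$; once that is in hand, the hypothesis $c<1$ ensures $1-c^2\neq 0$ and completes the contradiction.
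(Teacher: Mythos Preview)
Your argument follows essentially the same route as the paper's: identify the root $c\in(0,1)$ in the $C>0$ case of Proposition \ref{pr-52}, set $T=2\pi r/\sqrt{1-c^2}$, take $u_1(t,s)=\sin(cs)\sin(2\pi t/T)$, and verify the three Crandall--Rabinowitz hypotheses via Lemma \ref{le-h0h1}. Your treatment of the transversality condition is in fact more careful than the paper's, since you explicitly check that $D_H{\cal B}u_1=0$ (the paper simply writes the second component as ${\cal B}u_1=0$ without differentiating in $H$); your observation that the root equation $c\cos(c\beta)=\cot\gamma\sin(c\beta)$ is exactly what kills this term is correct and worth making explicit.

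There is, however, one step the paper includes that you omit: the assertion $E_0=\mbox{span}\{u_1\}$ (hence $\dim E_0=1$) is not automatic from the mere existence of a root $c$. The paper argues uniqueness of the root by setting $\psi(c)=c\tan\gamma-\tan(c\beta)$ and noting that $\psi(0)=0$, $\psi'(0)>0$, and $\psi$ is eventually monotone decreasing toward $c\beta\to\pi/2$, so there is exactly one root in the relevant interval. Without this, another root $c'\in(0,1)$ (or a contribution from another Fourier mode $n$ yielding $C_n=c'^2$) could in principle enlarge the kernel and invalidate the simple-eigenvalue hypothesis of Theorem \ref{t-cr}. You should add this uniqueness check before concluding $\dim E_0=1$.
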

\begin{proof} The reasoning in both cases is similar and we only consider the first one.  We know that there is solution of \eqref{eq-g} if $C>0$. In such case the function $g_n$ is $g_n(s)=A\sin(cs)$ with $c\tan\gamma-\tan(c\beta)=0$. This equation has a root for some $c\in (0,1)$. We claim that this solution is unique. For this,   we define the function $\psi(c)=c\tan\gamma-\tan(c\beta)$, which satisfies $\psi(0)=0$, $\psi'(0)>0$ and $\psi$ decreases monotonically as $c\rightarrow \pi/(2\beta)$. This proves that the solution $c$ is unique. A similar reasoning as in the above results provides the value of the period:  $T=2\pi r/\sqrt{1-c^2}$. For this value of $T$, $\lambda=0$ is an eigenvalue whose eigenfunction is  $u_1(t,s)=g_1(s)\sin(2\pi t/T)$ with $g_1(s)=\sin(cs)$. We now study the third hypothesis in Theorem \ref{t-cr}. The value of ${\cal B}u_{1}$ is zero again. Here
$D_HD_uF(0,H)(u_1)=(8H(u_1)_{ss}+u_1,0)$.  If this pair belongs to $\mbox{Im}(D_uF(0,H))$, Lemma \ref{le-h0h1} implies that
$$\int_M u_2(8H(u_2)_{ss}+u_2)\ dM=\int_M 8H(1-c^2)u_2^2\ dM=0.$$
As $c^2-1\not=0$, we obtain a contradiction and thus $D_HD_uF(0,H)(u_1)\not\in \mbox{Im}(D_uF(0,H))$. This proves the result.
\end{proof}

{\it Acknowledgement}. The   author would like to thank Antonio Ros for  his helpful
comments and suggestions in this work. Part of this work has been developed when the author was visiting  the Max Planck Institute of Colloids and Interfaces, at Potsdam in 2010. The author would like to  thank   Prof. Lipowsky and its research group for   his hospitality.



\end{document}